\documentclass[12pt]{amsart}
\usepackage{latexsym, amsmath, amssymb, mathrsfs}
\usepackage{hyperref} 



\theoremstyle{plain}
  \newtheorem{Thm}{Theorem}[section] 
  \newtheorem{Lma}[Thm]{Lemma} 
  \newtheorem{Cor}[Thm]{Corollary} 
  \newtheorem{Prop}[Thm]{Proposition}

\theoremstyle{definition}
  \newtheorem{Def}[Thm]{Definition}
  \newtheorem{Q}[Thm]{Question}
  \newtheorem{Exa}[Thm]{Example}

  \newtheorem{Dis}[Thm]{Discussion}

\theoremstyle{remark}
  \newtheorem{Rem}[Thm]{Remark}

\setcounter{tocdepth}{3}
\newcommand{\mlabel}[1]%
  {\mbox{}\marginpar{\raggedleft\hspace{0pt}{\rm\ttfamily#1}}\label{#1}}

\newcommand{\rank}{\operatorname{rank}}

\newcommand{\ord}{\operatorname{ord}}

\newcommand{\Hom}[3]{\operatorname{Hom}_{#1}(#2,#3){}}
\newcommand{\R}[1]{{R^{({#1})}}}
\newcommand{\M}[1]{{M^{({#1})}}}
\newcommand{\A}[1]{{A^{({#1})}}}
\newcommand{\B}[1]{{B^{({#1})}}}

\newcommand{\trunc}{\vert}

\newcommand{\fm}{{\mathfrak m}}

\newcommand{\ringR}{(R,\fm,k)} 

 \newcommand{\ann}{{\rm Ann}}

\newcommand{\Dim}{{\rm dim}}

\newcommand{\E}{E_R(k)}

\newcommand{\cx}{{\rm cx}}
\renewcommand{\hom}{\operatorname{Hom}}

\newcommand{\mI}{\mathbf I}



\newcommand{\sR}{\mathscr{R}}
\newcommand{\sF}{\mathscr{F}}
\newcommand{\sS}{\mathscr{S}}

\newcommand{\sy}[1]{^{(#1)}}

\newcommand{\segre}{\,\sharp\,} 
\newcommand{\ol}{\overline} 
\newcommand{\ul}{\underline}

\providecommand{\rank}{\operatorname{rank}}
\providecommand{\deg}{\operatorname{deg}}

\renewcommand{\ge}{\geqslant} \renewcommand{\le}{\leqslant} 
\renewcommand{\geq}{\geqslant} \renewcommand{\leq}{\leqslant} 
\renewcommand{\simeq}{\cong}
\newcounter{hours}\newcounter{minutes}

\setlength{\topmargin}{-0.1in}   
\setlength{\oddsidemargin}{0.0in}
\setlength{\evensidemargin}{0.0in}
\setlength{\textwidth}{6.5in}  
\setlength{\textheight}{8.4in}  

\newcommand{\excise}[1]{}
\begin{document}

\title[The Frobenius complexity of a local ring]
{\bf The Frobenius complexity of a local ring of prime characteristic}
\author[Florian~Enescu]{Florian Enescu}
\author[Yongwei~Yao]{Yongwei Yao}
\address{Department of Mathematics and Statistics, Georgia State
  University, Atlanta, GA 30303 USA} 
\email{fenescu@gsu.edu}
\email{yyao@gsu.edu}
\subjclass[2010]{Primary 13A35}
\thanks{The first author was partially supported by the NSA grant H98230-12-1-0206.}  

\date{}

\begin{abstract}
We introduce a new invariant for local rings of prime characteristic, called Frobenius complexity, that measures the abundance of Frobenius actions on the injective hull of the residue field of a local ring. We present an important case where the Frobenius complexity is finite, and prove that complete, normal rings of dimension two or less have Frobenius complexity less than or equal to zero. Moreover, we compute the Frobenius complexity for the determinantal ring obtained by modding out the $2 \times 2$ minors of a $2 \times 3$ matrix of indeterminates, showing that this number can be positive, irrational and depends upon the characteristic. We also settle a conjecture of Katzman, Schwede, Singh and Zhang on the infinite generation of the ring of Frobenius operators of a local normal complete $\mathbb{Q}$-Gorenstein ring.

\end{abstract}

\maketitle

\section{Introduction}

Throughout this paper $R$ is a commutative Noetherian ring, often local, of positive characteristic $p$, where $p$ is prime. Let $q=p^e$, where $e \in \mathbb{N}  = \{ 0, 1, \ldots \}$. Consider the $e$th Frobenius homomorphism $F^e:R\to R$ defined $F(r)=r^q$, for all $r \in R$. For an $R$-module $M$, an $e$th Frobenius action (or Frobenius operator) on $M$ is an additive map $\phi: M \to M$ such that $\phi(rm)= r^{p^e}\phi(m)$, for all $r \in R, m \in M$.

In recent years, there has been an interest in the study of the
Frobenius actions on the local cohomology modules $H^i_{\fm}(R)$, $i
\in \mathbb{N}$, and the injective hull of the residue field of a
local ring $\ringR$, denoted here by $E=E_R(k)$. Many applications to
problems either coming from tight closure theory in commutative
algebra or from positive characteristic algebraic geometry have been
found this way. Lyubeznik and Smith have been naturally led to the
study of rings of Frobenius operators in relation to the localization
problem in tight closure theory, and asked whether the Frobenius ring
of operators on $E$ is finitely generated over $R$ in~\cite{LS}.
Katzman has shown that, in general, this is not true
in~\cite{K}. Later, \`{A}lvarez Montaner, Boix and Zarzuela showed
that infinite generation is common even among nice classes of rings
in~\cite{ABZ}. Other important aspects of the generation of
$\sF{(E)}$, including introducing the twisted construction, were
studied by Katzman, Schwede, Singh and Zhang recently in~\cite{KSSZ}. 

The goal of our paper is to formulate a new invariant for rings of
prime characteristic that describes the abundance of Frobenius
operators naturally associated to the ring. This concept allows us to
measure systematically the generation of the ring of Frobenius
operators, finite or infinite. We will be mainly concerned with
Frobenius operators on the injective hull of the residue field of the
ring. An interesting byproduct of our work is that the phenomenon
investigated here, the Frobenius complexity, appears to only be
loosely connected to tight closure theory, although Frobenius
operators were first studied in relation to it. This seems to suggest
that the Frobenius complexity of a ring is a new feature of prime
characteristic rings in addition to those coming from tight closure
theory. 

In this paper, we prove that the ring of Frobenius operators of a
zero-dimensional ring is finitely generated, see Theorem~\ref{zero}. 
We answer positively a conjecture on $\mathbb{Q}$-Gorenstein
rings posed by Katzman, Schwede, Singh and Zhang in~\cite{KSSZ}, see
Theorem~\ref{thm:conj}. 
Under certain conditions on its anticanonical cover, we show that the
Frobenius complexity of a local complete normal ring is finite, see
Theorem~\ref{thm:fg-finite}. Then in Theorem~\ref{sally}, we show
that the complexity of the ring is less than or equal to zero for
rings of dimension at most two.   
Finally, in Theorem~\ref{thm:segre}, we show 
that even nice rings, such as 
determinantal rings, which are Cohen-Macaulay and strongly F-regular,
can have strictly positive complexity.

Let us review some of the main notation that will be used in this
paper. For any $e \geq 0$, we let $R^{(e)}$ be the $R$-algebra defined
as 
follows: as a ring $\R{e}$ equals $R$ while the $R$-algebra structure
is defined by $r \cdot s = r^{q} s$, for all $r \in R,\, s \in
R^{(e)}$. Note that when $R$ is reduced 
we have that $\R{e}$ is isomorphic to $R^{1/q}$ as $R$-algebras. Also,
$\R{e}$ as an $\R{e}$-algebra is simply $R$ as an
$R$-algebra. Similarly, for an $R$-module $M$, we can define 
a new $R$-module structure on $M$ by letting $r * m = r^{p^e}m$, for
all $r\in R,\,  m\in M$. We denote this
$R$-module by $\M{e}$. For example, given an ideal $I$ of $R$, we have
$ \R{e} \otimes_R R/I$ 
is (naturally isomorphic to) $R/I^{[q]}$, in which $I^{[q]}$ is the
ideal of $R$ generated by $\{x^q : x \in I\}$. 

Consider now an $e$th Frobenius action on $M$, $\phi: M \to M$. This map can naturally be identified with an $R$-module homomorphism $\phi : M \to \M{e}$. It can be seen that such an action naturally defines an $R$-module homomorphism $f_{\phi}: \R{e} \otimes_R M \to M$, where $f_{\phi} (r \otimes m) = r \phi(m)$, for all $r \in R,\, m \in M$. Here, $\R{e}$ has the usual structure as an $R$-module given by $\R{e}=R$ on the left, while on the right we have the twisted Frobenius action.


Let $\sF^e(M)$ be the collection of all $e$th Frobenius operators on $M$. We have a natural $R$-module isomorphism:
$$\sF^e{(M)} = \Hom{R}{M}{\M{e}} \simeq \Hom{R}{\R{e} \otimes_R M}{M},$$
defined by $P(\phi) = f_{\phi}$. The $R$-module structure on $\sF^e{(M)}$ is given by $(r\phi)(x) = r\phi(x)$ for every $r \in R,\, \phi  \in \sF^e{(M)}$ and $x \in M$.

Note that $P$ is additive and $P( r' \phi) (r \otimes m)= r((r'\phi)(m)) = r (r'\phi(m))= rr'\phi(m)= r'(r\phi(m)) = r'P(\phi)(r\otimes m)$. And so $P(r'\phi) = r'P(\phi)$, for all $r \in R$, all $\phi \in  \Hom{R}{M}{\M{e}}.$

\begin{Def}\label{def:sfe} 

We define {\it the algebra of Frobenius operators} on $M$ by
$$\sF{(M)} = \oplus_{e\geq 0} \sF^e{(M)}.$$

The ring operation on $\sF{(M)}$ is given by composition of functions
(as multiplication). If $\phi \in \sF^e{(M)},\, \psi \in
\sF^{e'}{(M)}$ then $\phi \psi:=\phi\circ \psi \in \sF^{e+e'}(M)$. 
Note that $\phi \psi \neq \psi \phi$ in general.

The ring operation on $\sF{(M)}$ defines a module structure
$\sF^e{(M)}$ over $\sF^0{(M)} = {\rm End}_R(M)$. Since $R$ maps
canonically to $\sF^0{(M)}$, this makes $\sF^e{(M)}$ an $R$-module by
restriction of scalars. 
Note that $(\phi \circ r)(m) = \phi(rm) = (r^q\phi)(m)$, for
all $r \in R, m \in M$. Therefore, $\phi r = r^q \phi$, for all $r\in
R, \, \phi \in \sF^e{(M)}$, with $q=p^e$.
This indicates that, in general, ${\rm End}_R(M)$ is not in the center
of $\sF{(M)}$ and hence $\sF{(M)}$ is not an $R$-algebra using the
well established notion of an algebra. 
\end{Def}

\section{The Complexity of a Noncommutative Graded Ring}

\begin{Def}Let $A= \oplus_{e\geq 0} A_e$ be a $\mathbb{N}$-graded ring, not
  necessarily commutative. 
\begin{enumerate}
\item Let $G_e(A)= G_e$ be the subring of $A$ generated by the homogeneous elements of
  degree less than or equal to $e$. (So $G_0(A) = A_0$.) 
We agree that $G_{-1} = A_0$.

\item We use $k_e=k_e(A)$ to denote the minimal number of homogeneous
  generators of $G_e$ as a subring of $A$ over $A_0$. 
(So $k_0 = 0$.) We agree that $k_{-1} = 0$.
We say that $A$
  is {\it degree-wise finitely generated} if $k_e < \infty$ for all
  $e$. 

\item For a degree-wise finitely generated ring $A$, we say that a set
  $X$ of homogeneous elements of $A$ minimally generates $A$ if for all
  $e$, $X_{\leq e} =\{ a \in X: \deg(a) \leq e \}$ is a minimal set of
  generators for $G_e$ with $k_e = |X_{\leq e} |$ for every $e \ge 0$. Also,
  let $X_e= \{ a \in X: \deg(a)=e \}$. 
\end{enumerate}
\end{Def}

\begin{Rem} 
Let $A= \oplus_{e\geq 0} A_e$ be a $\mathbb{N}$-graded ring, not
necessarily commutative. 
\begin{enumerate}
\item  Note that $G_e$ is $\mathbb{N}$-graded and $G_e \subseteq
  G_{e+1}$, for all $e \ge 0$. Also, $(G_e)_i= A_i$ for all $0
  \leq i \leq e$ and $(G_e)_{e+1} \subseteq A_{e+1}$. Moreover, both $A_i$
  and $(G_e)_i$ are naturally $A_0$-bimodules, for all $i,\, e$.  
\item Assume that $X$ minimally generates $A$. Then 
  $|X_e | = k_{e} -k_{e-1}$ for all $e \ge 1$. 
\item Every degree-wise finitely generated $\mathbb N$-graded ring
  admits a minimal generating set; 
see Proposition~\ref{prop:minimal-gen} next. 
\end{enumerate}
\end{Rem}

\begin{Prop} \label{prop:minimal-gen}
With the notations introduced above,  
let $X$ be a set of homogeneous elements of $A$. Then
\begin{enumerate}
\item The set $X$ generates $A$ as a ring over $A_0$ if and only if 
$X_{\le e}$ generates $G_e$ as a ring over $A_0$ for all $e \ge 0$ 
if and only if the image of $X_{e}$ generates
$\frac{A_{e}}{(G_{e-1})_{e}}$ as an $A_0$-bimodule for all $e \ge 0$.  
\item Assume that $A$ is degree-wise finitely generated
  $\mathbb{N}$-graded ring and $X$ generates $A$ as a ring over $A_0$. 
The set $X$ minimally generates $A$ as a ring over $A_0$ if and
  only if $|X_{e}|$ is the minimal number of generators (out of all
 homogeneous generating sets) of $\frac{A_{e}}{(G_{e-1})_{e}}$ as an
  $A_0$-bimodule for all $e \ge 0$.  
\end{enumerate}
\end{Prop}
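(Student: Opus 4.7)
The plan is to derive both parts from a single degree-by-degree correspondence: generation of $G_e$ as a ring over $A_0$ translates precisely into generation of the successive quotients $A_e/(G_{e-1})_e$ as $A_0$-bimodules. For part (1), I would first prove the equivalence ``$X$ generates $A$ over $A_0$ iff $X_{\le e}$ generates $G_e$ for every $e$''. The forward direction is a direct consequence of the $\mathbb{N}$-grading: any homogeneous element of degree $d\le e$, when expanded as a noncommutative polynomial in $X\cup A_0$, can only involve factors from $X$ of degree $\le d\le e$ by degree counting. The reverse direction uses $A = \bigcup_{e}G_e$. Next, I would handle the bimodule equivalence. For ``$\Rightarrow$'', a degree-$e$ monomial in $X_{\le e}\cup A_0$ either uses no element of $X_e$ (and lies in $(G_{e-1})_e$) or, by degree counting, uses exactly one element $x\in X_e$ with only $A_0$-factors elsewhere; hence the image of $X_e$ generates $A_e/(G_{e-1})_e$ as an $A_0$-bimodule. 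For ``$\Leftarrow$'', I would induct on $e$: assuming $\langle A_0, X_{\le e-1}\rangle = G_{e-1}$, the hypothesis $A_e \subseteq (G_{e-1})_e + A_0\, X_e\, A_0$ yields $A_e \subseteq \langle A_0, X_{\le e}\rangle$, whence $G_e \subseteq \langle A_0, X_{\le e}\rangle$.

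For part (2), with $X$ a generating set and $m_e$ denoting the minimal number of $A_0$-bimodule generators of $A_e/(G_{e-1})_e$ (finite, since $A$ is degree-wise finitely generated), part (1) yields $|X_e|\ge m_e$ and $|X_{\le e}|\ge k_e$ for every $e$. For the backward implication, pick a minimum homogeneous generating set $Y$ of $G_e$ with $|Y|=k_e$; by the same degree-counting as in part (1), one may assume $Y$ consists of elements of degree $\le e$ (any higher-degree element is redundant once the lower-degree ones generate $A_i$ for $i\le e$). Applying part (1) to the graded subring $G_e$---whose pieces up to degree $e$ coincide with those of $A$---gives $|Y_j|\ge m_j$ for each $j\le e$, so
\[
 k_e \;=\; \sum_{j\le e}|Y_j| \;\ge\; \sum_{j\le e} m_j \;=\; |X_{\le e}| \;\ge\; k_e,
\]
forcing equality throughout. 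For the forward implication, if $|X_e|>m_e$ for some $e$, I would replace $X_e$ by an $A_0$-bimodule generating set $Y_e$ of $A_e/(G_{e-1})_e$ of cardinality $m_e$; the inductive step in part (1) shows that $X_{\le e-1}\cup Y_e$ still generates $G_e$, now with $k_{e-1}+m_e < k_{e-1}+|X_e| = k_e$ elements, contradicting the minimality of $k_e$.

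The main obstacle I expect is the careful bookkeeping around the noncommutative $A_0$-bimodule structure combined with the grading---specifically the claim that modulo $(G_{e-1})_e$, the only degree-$e$ monomials in $X_{\le e}\cup A_0$ that contribute nontrivially have the shape $a_0\cdot x\cdot a_1$ with $x\in X_e$ and $a_0,a_1\in A_0$. With the harmless convention that degree-zero elements of $X$ may be absorbed into $A_0$ (since $G_0=A_0$), this degree-counting is the one nontrivial verification; everything else assembles from induction and straightforward cardinality arguments.
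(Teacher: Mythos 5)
Your proposal is correct and follows essentially the same route as the paper: part (1) by degree counting to reduce generation of $G_e$ to $A_e = A_0X_eA_0 + (G_{e-1})_e$, and part (2) by comparing $|X_e|$ against a minimal homogeneous generating set $Y$ of $G_e$ via part (1) applied to $G_e$. The only cosmetic difference is that your backward implication in (2) is packaged as a direct sandwich $k_e=\sum_j|Y_j|\ge\sum_j m_j=|X_{\le e}|\ge k_e$ rather than the paper's contrapositive with a minimal offending degree, but the underlying counting is identical.
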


\begin{proof}
Both statements follow from consideration of degree. Here is a proof
with details. 

(1) Assume that $X$ generates $A$ as a ring over $A_0$. For any $e \ge
0$ and any $a \in A_e$, by considering degree, we see that $a$ can be
written as an expression involving elements in $A_0 \cup X_{\le e}$.
Consequently
\[
a \in A_0X_{e}A_0 + (G_{e-1})_e,
\]
in which $A_0X_{e}A_0$ stands for the $A_0$-bimodule generated by
$X_e$. Thus $A_e = A_0X_{e}A_0 + (G_{e-1})_e$, which verifies that
the image of $X_{e}$ generates $\frac{A_{e}}{(G_{e-1})_{e}}$ as an
$A_0$-bimodule, for all $e \ge 0$. 

Conversely, assume that the image of $X_{e}$ generates
$\frac{A_{e}}{(G_{e-1})_{e}}$ as an $A_0$-bimodule for all $e \ge 0$. 
It follows that, for any $e \ge 0$ and any $a \in A_e$,  we have
\[
a \in A_0X_{e}A_0 + (G_{e-1})_e,
\]
which implies that $a$ is generated by $X_{\le e}$ over
$A_0$. Therefore $A$, as a ring, is generated by $X$ over $A_0$. 

(2) Suppose that there exists $e \ge 0$ such that $|X_{\le e}| > k_e$. We
may further assume that this $e$ is minimal with the property. If $e =
0$, then $|X_{\le 0}| > k_0 = 0$ and hence $|X_{\le 0}|$ is greater
than the minimal number of generators (out of all
  homogeneous generating sets) of $\frac{A_{0}}{(G_{-1})_{0}} = 0$. 
So assume $e > 0$. Then $|X_{\le e-1}| = k_{e-1} < \infty$, hence 
\[
|X_e| = |X_{\le e}| - |X_{\le e-1}| > k_e - k_{e-1}.
\]
Choose a set $Y$ of homogeneous elements such that $Y$ (minimally)
generates $G_e$ with $|Y| = k_e$. It follows that $Y_{\le e-1}$ generates
$G_{e-1}$, so that $|Y_{\le e-1}| \ge k_{e-1}$. 
(In fact $|Y_{\le e-1}| = k_{e-1}$.) Thus
\[
|X_e|  > k_e - k_{e-1} \ge |Y| - |Y_{\le e-1}| 
= |Y_{\le e}| - |Y_{\le e-1}| = |Y_e|.
\]
Applying (1) to $G_e$ and $Y$, we see that the image of $Y_e$
generates $\frac{(G_e)_e}{(G_{e-1})_{e}} = \frac{A_e}{(G_{e-1})_{e}}$
as an $A_0$-bimodule. This shows that $|X_{e}|$ is not the minimal
number of generators of $\frac{A_{e}}{(G_{e-1})_{e}}$ as an
$A_0$-bimodule.

Conversely, suppose that, for some $e \ge 0$, $|X_{e}|$ is not 
(hence greater than) the minimal number of generators of
$\frac{A_{e}}{(G_{e-1})_{e}}$ as an $A_0$-bimodule. 
The case $e = 0$ is trivial. So we assume $e > 0$. 
There exists $Y_e \subseteq A_e$ with $|Y_e| < |X_e|$ such that $Y_e$
consists of homogeneous elements and the image of $Y_e$ generates
$\frac{A_{e}}{(G_{e-1})_{e}}$ as an 
$A_0$-bimodule. By (1) applied to $G_e$, we see
that $X_{\le e-1} \cup Y_e$ generates $G_e$. Therefore, either $|X_{\le e}|
= \infty > k_e$ or $|X_{\le e}| > |X_{\le e-1}| + |Y_e| \ge k_e$, implying
that $X$ does not minimally generate $A$ as a ring over $A_0$.
\end{proof}

\begin{Cor} \label{cor:minimal-gen}
Let $A$ be a degree-wise finitely generated $\mathbb{N}$-graded ring
and $X$ a set of homogeneous elements of $A$. Then
\begin{enumerate}
\item The minimal number of generators of $\frac{A_{e}}{(G_{e-1})_{e}}$ as an
  $A_0$-bimodule is $k_e - k_{e-1}$ for all $e \ge 0$. 
\item If $X$ is generates $A$ as a ring over $A_0$ then 
$|X_e| \ge k_e - k_{e-1}$ for all $e \ge 0$. 
\end{enumerate}
\end{Cor}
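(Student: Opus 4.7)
The plan is to deduce both parts directly from Proposition~\ref{prop:minimal-gen} after first constructing a minimal generating set of $A$ (this is the content of Remark 2.2(3)), which I would build degree by degree.

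To construct a minimal generating set $X$, I would proceed inductively. For each $e \ge 0$, the ring $G_e$ admits a finite homogeneous generating set $Y$ over $A_0$ because $k_e < \infty$; by Proposition~\ref{prop:minimal-gen}(1) applied to $G_e$, the image of $Y_e$ generates $A_e/(G_{e-1})_e$ as an $A_0$-bimodule. Hence this bimodule admits a finite homogeneous generating set, from which one may extract a minimal one $X_e \subseteq A_e$. Setting $X = \bigcup_{e \ge 0} X_e$, Proposition~\ref{prop:minimal-gen}(1) shows that $X$ generates $A$ as a ring over $A_0$, and Proposition~\ref{prop:minimal-gen}(2) shows that this generating set is minimal.

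For part (1), since $X$ minimally generates $A$, Remark 2.2(2) gives $|X_e| = k_e - k_{e-1}$; on the other hand, $|X_e|$ is by construction the minimal number of $A_0$-bimodule generators of $A_e/(G_{e-1})_e$, which yields the claimed equality. For part (2), let $X$ be any homogeneous set generating $A$ as a ring over $A_0$. Then Proposition~\ref{prop:minimal-gen}(1) tells us the image of $X_e$ generates $A_e/(G_{e-1})_e$ as an $A_0$-bimodule; combining this with part (1) yields $|X_e| \ge k_e - k_{e-1}$.

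The main obstacle is really just bookkeeping: one must verify the existence of the minimal generating set with some care, since Remark 2.2(3) is only justified via Proposition~\ref{prop:minimal-gen} itself and so cannot be taken for granted. Once that construction is in place, both assertions of the corollary follow by tracking cardinalities through the two parts of Proposition~\ref{prop:minimal-gen}, with no additional computation required.
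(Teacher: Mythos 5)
Your proof is correct and follows essentially the same route as the paper, which simply derives both parts from Proposition~\ref{prop:minimal-gen}; you have filled in the details the paper leaves implicit, namely the inductive construction of a minimal generating set (Remark 2.2(3)) and the cardinality bookkeeping via Remark 2.2(2). No gaps.
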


\begin{proof}
Both (1) and (2) follow from Proposition~\ref{prop:minimal-gen}.
\end{proof}

From the above, we see that the sequence $\{k_e\}_e$ or 
$\{k_e-k_{e-1}\}_e$ describes how far away $A$ is from being finitely
generated over $A_0$. This leads us to define the \emph{complexity} of
$A$ as follows. Recall that, for sequences $\{a_e\}_e$ and
$\{a'_e\}_e$, we write $a_e = O(a'_e)$ precisely when there exists $b
\in \mathbb R$ and $k \in \mathbb N$ such that $|a_e| \le |ba'_e|$ for
all $e \ge k$.  

\begin{Def}
\label{degreewise}
Let $A$ be a degree-wise finitely generated ring.
The sequence $\{k_e\}_e$ is called the {\it growth} sequence for
$A$. The {\it complexity} sequence is given by $\{ c_{e}(A)=
k_{e}-k_{e-1} \}_{e\geq 0}$.   
The {\it complexity} of $A$ is
$$\inf \{ n \in \mathbb{R}_{> 0}: c_e(A)=k_{e} - k_{e-1} = O(n^e) \}$$ 
and it is denoted by $\cx(A)$. If there is no $n >0$ such that
$c_e(A)= O(n^e)$, then we say that $\cx(A)=\infty$. 
\end{Def}

\begin{Rem}
\label{locoh}
Let $A$ be as above.
\begin{enumerate}
\item Note that $\cx(A) >0$ implies $\cx(A) \geq 1$. (Indeed, if $0 <
  n < 1$, then $n^e \to 0$ as $e \to \infty$. Thus $c_e(A)=k_{e} -
  k_{e-1} = O(n^e)$ with $0 < n < 1$ implies that $c_e(A)$ is
  eventually zero.)
\item
It is obvious that $\cx(A)=0$ if and only if the sequence 
$\{c_{e}(A)\}_{e\geq 0}$ is eventually zero if and only if $A$ is
finitely generated as a ring over $A_0$. 
\item Similarly, $\cx(A)=1$ if the sequence 
$\{c_{e}(A)\}_{e\geq 0}$ is bounded by above, but not eventually zero.
\item When $R$ is $d$-dimensional, complete and $S_2$, Lyubeznik and
  Smith have showed that $\sF(H^d_{\fm}(R))$ is generated by one
  element over $\sF^0(H^d_{\fm}(R))=R$, namely the canonical Frobenius
  action $F$ on $H^d_{\fm}(R)$, see Example 3.7 in~\cite{LS}. This
  shows that $\cx(H^d_{\fm}(R)) = 0$ for $d$-dimensional $S_2$ local
  rings. 
\end{enumerate}
\end{Rem}

\begin{Def}\label{def:nearly-onto}
Let $A$ and $B$ be $\mathbb N$-graded rings and $h \colon A \to B$ be
a graded ring homomorphism. We say that $h$ is \emph{nearly onto} if 
$B = B_0[h(A)]$ (that is, $B$ as a ring is generated by $h(A)$ over $B_0$). 
\end{Def}

\begin{Thm}
\label{thm:nearly-onto}
Let $A$ and $B$ be $\mathbb N$-graded rings that are degree-wise
finitely generated. If there
exists a graded ring homomorphism $h \colon A \to B$ that
is nearly onto, then $c_e(A) \ge c_e(B)$ for all $e \ge 0$.
\end{Thm}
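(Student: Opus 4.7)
The plan is to reduce everything to a counting argument via Corollary~\ref{cor:minimal-gen}: namely, that $c_e(A)$ equals the minimal number of generators of $A_e/(G_{e-1}(A))_e$ as an $A_0$-bimodule, and any generating set of $A$ over $A_0$ has at least $c_e(A)$ homogeneous elements in degree $e$. So my strategy will be to start from a minimal generating set of $A$ and push it forward through $h$ to obtain a (not necessarily minimal) generating set of $B$.

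First I would choose, using the remark immediately preceding Proposition~\ref{prop:minimal-gen} together with the proposition itself, a set $X$ of homogeneous elements of $A$ that minimally generates $A$ over $A_0$, so that $|X_e| = c_e(A)$ for all $e \ge 0$. Next I would verify that $h(X)$ generates $B$ as a ring over $B_0$. Since $h$ is a graded ring homomorphism we have $h(A_0) \subseteq B_0$, and therefore the subring $h(A_0)[h(X)]$ equals $B_0[h(X)]$ restricted to the image; more precisely, $h(A) = h(A_0[X]) = h(A_0)[h(X)] \subseteq B_0[h(X)]$, so nearly-ontoness gives $B = B_0[h(A)] \subseteq B_0[h(X)] \subseteq B$.

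Now I would compare cardinalities degree by degree. Since $h$ is graded and $X$ consists of homogeneous elements, $h(X_e) \subseteq B_e$ and each element of $h(X)$ of degree $e$ is the image of some element of $X_e$ (modulo the caveat that $h$ may have a kernel or may identify elements). In particular $|h(X)_e| \le |X_e|$. Applying part~(2) of Corollary~\ref{cor:minimal-gen} to the generating set $h(X)$ of $B$ over $B_0$ then yields
\[
c_e(B) \;\le\; |h(X)_e| \;\le\; |X_e| \;=\; c_e(A),
\]
which is the desired inequality.

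The only real subtlety I foresee is the second inequality: one has to be careful that $h$ need not be injective, so a priori several elements of $X_e$ may collapse to the same element of $B_e$ (or even to zero). This is harmless here, since it only decreases $|h(X)_e|$ and the inequality goes the right way. Apart from that, the argument is essentially a bookkeeping exercise built on top of Proposition~\ref{prop:minimal-gen} and its corollary, with the hypothesis ``nearly onto'' used precisely to ensure that the pushed-forward generating set $h(X)$ still generates $B$ as a ring over $B_0$ rather than only over $h(A_0)$.
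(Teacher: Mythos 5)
Your proof is correct and follows essentially the same route as the paper: pick a minimal homogeneous generating set $X$ of $A$ over $A_0$, use nearly-ontoness to see that $h(X)$ generates $B$ over $B_0$, and then apply Corollary~\ref{cor:minimal-gen} together with $|h(X)_e| \le |X_e|$ to get $c_e(B) \le c_e(A)$. The caveat you raise about $h$ possibly collapsing elements of $X_e$ is handled identically (and just as briefly) in the paper's chain of inequalities.
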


\begin{proof}
Choose a set $X$ of homogeneous elements of $A$ such that $X$
minimally generates $A$ as a ring over $A_0$. Since $h \colon A \to B$ 
is nearly onto, we see
\[
B = B_0[h(A)] = B_0[h(X)].
\]
This implies that $h(X)$ is a set of homogeneous elements of $B$ and
moreover $h(X)$ generates $B$ as a ring over $B_0$. 
By Corollary~\ref{cor:minimal-gen}, we see
\[
c_e(A) = |X_e| \ge |h(X_e)| = |(h(X))_e| \ge c_e(B)
\]
for all $e$. 
\end{proof}

\begin{Def}
Let $A$ be a $\mathbb{N}$-graded ring such that there exists a ring homomorphism $R \to A_0$, where $R$ is a commutative ring. We say that $A$ is a (left) $R$-{\it skew algebra} if $aR \subseteq Ra$ for all homogeneous elements $a \in A$. A right $R$-skew algebra can be defined analogously. In this paper, our $R$-skew algebras will be left $R$-skew algebras and therefore we will drop the adjective ``left'' when referring it to them.

\end{Def}

\begin{Cor}
\label{interpretation}
Let $A$ be a degree-wise finitely generated $R$-skew algebra such that
$R=A_0$. Then $c_{e}(A)$ equals the minimal number of generators of
$\frac{A_{e}}{(G_{e-1})_{e}}$ as a left $R$-module for all $e$.
\end{Cor}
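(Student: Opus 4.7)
My plan is to reduce the statement directly to Corollary~\ref{cor:minimal-gen}. That corollary already identifies $c_e(A)$ with the minimal number of homogeneous generators of $A_e/(G_{e-1})_e$ as an $A_0$-bimodule, and since $R = A_0$ by hypothesis, this is just the minimal number of generators as an $R$-bimodule. The only remaining task is therefore to show that, under the $R$-skew hypothesis, the minimal number of generators of $A_e/(G_{e-1})_e$ as an $R$-bimodule equals its minimal number of generators as a left $R$-module.

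One direction is automatic: any generating set as a left $R$-module is a fortiori a generating set as an $R$-bimodule, so the bimodule minimum is at most the left-module minimum. For the reverse inequality I will use the skew relation $aR \subseteq Ra$ (valid for homogeneous $a$) to convert right scalars into left scalars. Given an $R$-bimodule generating set $\{x_i\} \subseteq A_e$ for $A_e/(G_{e-1})_e$, a typical element has a representative of the form $\sum_i r_i x_i s_i$ with $r_i, s_i \in R$ and each $x_i$ homogeneous of degree $e$. The skew hypothesis applied to each $x_i$ supplies $t_i \in R$ with $x_i s_i = t_i x_i$ in $A_e$, and substituting yields
\[
\sum_i r_i x_i s_i = \sum_i (r_i t_i)\, x_i,
\]
which is a left $R$-combination of $\{x_i\}$. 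Hence $\{x_i\}$ already generates $A_e/(G_{e-1})_e$ as a left $R$-module, and the two minima agree.

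Combining this with Corollary~\ref{cor:minimal-gen}(1) gives the desired equality. I do not foresee any real obstacle here. The only routine point to verify is that $(G_{e-1})_e$ is stable under both the left and right $R$-actions on $A_e$, so that these actions actually descend to the quotient; this is immediate since $G_{e-1}$ is a graded subring of $A$ containing $A_0 = R$, making $(G_{e-1})_e$ an $R$-sub-bimodule of $A_e$.
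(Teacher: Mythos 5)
Your proposal is correct and matches the paper's own argument: the paper likewise deduces the statement from Corollary~\ref{cor:minimal-gen} together with the observation that the skew relation forces $A_0XA_0 = RXR = RX$ for any set $X$ of homogeneous elements, which is exactly your conversion of right scalars into left scalars via $x_is_i = t_ix_i$.
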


\begin{proof}
This follows from Corollary~\ref{cor:minimal-gen} and the fact that
$A_0XA_0 = RXR= RX$ for any set $X$ of homogeneous elements of $A$.
\end{proof}

An important example for us is the ring of Frobenius operators on an $R$-module $M$. Note that there exists a canonical homomorphism  $R \to \sF^0{(M)} = {\rm End}_R(M)$. The main example is the case of a complete local ring $\ringR$ and $M=E=E_R(k)$, the injective hull of the residue field of $R$ as an $R$-module. In this case $\sF{(E)}$ is an $R$-skew algebra and $R= \sF^0{(E)}$.

Next, we prove that $\sF{(E)}$ is finitely generated over $R$ when
$\dim(R) = 0$. In fact, we will prove a more general result concerning
$\sF{(M)}$. 

\begin{Thm}\label{zero}
If $M$ is an $R$-module with finite length, then $\sF(M)$ is finitely
generated over $\sF^0(M) = \hom_R(M,M)$ (and also over $R$ via the
natural ring homomorphism $R \to \hom_R(M,M)$).
\end{Thm}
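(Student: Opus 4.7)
\emph{Proof plan.} My plan is to reduce to the case where $(R,\fm,k)$ is Artinian local, show that $\sF^e(M)$ stabilizes via evaluation on a fixed lift of a $k$-basis of $M/\fm M$ for every $e$ large, and finally factor every high-degree Frobenius operator as a composition of two of strictly smaller degree by means of an explicit ``identity-on-basis'' element. For the reduction: since $M$ has finite length, $\bar R := R/\ann_R(M)$ is Artinian, the Frobenius on $R$ preserves every ideal and so descends to $\bar R$, and $\bar R$ splits as $\prod_i R_i$ into Artinian local rings. If $\varepsilon_i \in \bar R$ are the associated idempotents, then $\phi(\varepsilon_i m) = \varepsilon_i^q \phi(m) = \varepsilon_i \phi(m)$ shows that every Frobenius operator $\phi$ preserves the decomposition $M = \bigoplus_i M_i$, and $\sF(M)$ splits accordingly as a product of graded rings. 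Hence I may assume $(R,\fm,k)$ is Artinian local with $\fm^N = 0$ for some $N \ge 1$.

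Fix lifts $m_1,\dots,m_n \in M$ of a $k$-basis of $M/\fm M$ (so $n = \mu(M)$) and set $e_0 := \lceil \log_p N \rceil$, so that $q := p^e \ge N$ precisely for $e \ge e_0$. The key technical lemma I will prove is that the evaluation map
$$\operatorname{ev}\colon \sF^e(M) \longrightarrow M^n, \qquad \phi \longmapsto (\phi(m_1),\dots,\phi(m_n))$$
is an $R$-module isomorphism for every $e \ge e_0$. Injectivity is immediate from $\phi(\fm M) \subseteq \fm^q M = 0$, which forces $\phi$ to factor through $M/\fm M$. For surjectivity, given $(v_1,\dots,v_n) \in M^n$ I will define a Frobenius operator by $\phi(m) := \sum_i c_i^q v_i$, where the $c_i \in R$ are any lifts of the coordinates of $\bar m \in M/\fm M$ in the basis $\{\bar m_i\}$; independence of the lifts, additivity, and Frobenius semi-linearity all follow from the characteristic-$p$ identity $(a-b)^q = a^q - b^q$ combined with $\fm^q \subseteq \fm^N = 0$.

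With this identification in hand, for each $e \ge e_0$ let $\psi_e \in \sF^e(M)$ denote the distinguished operator with $\operatorname{ev}(\psi_e) = (m_1,\dots,m_n)$. Whenever $e$ admits a splitting $e = e_1 + e_2$ with $e_0 \le e_1, e_2 < e$ (e.g.\ $e \ge 2e_0$ when $e_0 \ge 1$, and $e \ge 2$ when $e_0 = 0$), any $\phi \in \sF^e(M)$ with tuple $(v_1,\dots,v_n)$ satisfies $\phi = \phi_1 \circ \psi_{e_2}$, where $\phi_1 \in \sF^{e_1}(M)$ has the same tuple: both sides evaluate to $v_i$ on $m_i$, and $\operatorname{ev}$ is injective in degree $e$. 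Thus $\phi \in (G_{e-1})_e$ and $c_e(\sF(M)) = 0$ for all sufficiently large $e$. In each of the finitely many remaining low degrees, $\sF^e(M)$ embeds into $M^n$ via $\operatorname{ev}$ and so has finite $R$-length, hence finitely many generators as a left $\sF^0(M)$-module; assembling these across the low degrees produces a finite ring-generating set for $\sF(M)$ over $\sF^0(M)$.

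The main obstacle is the stabilization lemma itself — verifying that $\phi(m) := \sum_i c_i^q v_i$ is independent of the choice of lifts and defines a genuine Frobenius operator for every $e \ge e_0$. This rests entirely on the identity $(a-b)^q = a^q - b^q$ in characteristic $p$ together with $\fm^q M = 0$; once that identification is secured, both the explicit factorization $\phi = \phi_1 \circ \psi_{e_2}$ and the low-degree finiteness are formal consequences, and the statement for $R$ as base follows by combining the finite generating set with a module-generating set of the finite $R$-algebra $\sF^0(M) = \hom_R(M,M)$.
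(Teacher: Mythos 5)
Your proof is correct and follows essentially the same strategy as the paper's: reduce to the Artinian local case, identify $\sF^e(M)$ for $e \ge e_0$ with tuples of values on a fixed minimal generating set (the paper phrases this via matrices $\A{e}$), and use the distinguished ``identity'' operator $\psi_{e_2}$ (the paper's $\mI^{(e-e_0)}$) to factor every operator of large degree into a product of two of strictly smaller degree. The differences are only presentational --- your evaluation isomorphism versus the paper's matrix calculus and its composition rule $(\A{e})(\B{e'}) = (AB^{[p^e]})^{(e+e')}$, plus your more explicit justification of the reduction to the local case via Frobenius-fixed idempotents.
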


\begin{proof}
By replacing $R$ with $R/\ann(M)$, we may assume that $R$ is
Noetherian with $\dim(R) = 0$. Then, as every such ring is a direct
product of finitely many $0$-dimensional local rings, we may further
assume that $R = (R, \fm, k)$ is a Artinian local ring without loss of
generality. 

Fix a set of minimal generators $\{b_1, \dotsc, b_r\}$ for $M$ as an
$R$-module, so that the images of $b_i$ form a basis for $M/\fm M$.

There exists $e_0 \in \mathbb N$ such that $\fm^{[p^{e_0}]} \subseteq
\ann(M)$. (In fact $\fm^{[p^{e_0}]} = 0$, given that $M$ is actually
faithful after the consideration in the last paragraph.)
Thus the maximal ideal $\fm$ annihilates $\M{e}$ for $e \ge e_0$. 

Let $e$ be an arbitrary integer such that $e \ge e_0$. By the last
paragraph, every map in $\sF^e(M) = \hom_R(M,  \M{e})$ factors through
$M/\fm M$. Thus, for any (arbitrarily) chosen elements 
$m_j \in M$ with $j = 1, \dotsc, r$, there is a (unique) map in $\sF^e(M) =
\hom_R(M, \M{e})$ such that $b_j \mapsto m_j$ for all $j = 1,
\dotsc, r$. In particular, for any given $r \times r$ matrix $A =
(a_{ij} \in M_{r \times r}(R)$, there is a (unique) map in $\sF^e(M) =
\hom_R(M, \M{e})$ such that $b_j \mapsto \sum_{i=1}^ra_{ij}b_i$ for
all $j = 1, \dotsc, r$. (Here the expression $\sum_{i=1}^ra_{ij}b_i$
only involves the scalar multiplication of $M$, although it represents
an element in $\M{e}$.) We agree to use $\A{e}$ to denote the map
determined by $A$. To summarize, for every $A \in  M_{r \times r}(R)$
there is a corresponding map $\A{e} \in \sF^e(M)$, although
different matrices could determine the same map. 
In particular, we have the map $ \mI^{(e)} \in \sF^e(M)$ arising from the
identity matrix $\mI^{(e)} \in M_{r \times r}(R)$. 
(Note that this depends very much on the choice of $\{b_1, \dotsc,
b_r\}$. But we have already fixed $\{b_1, \dotsc, b_r\}$ earlier.)

Also, it is clear that every map $\phi \in \sF^e(M) = \hom_R(M,
\M{e})$ arises this way, i.e., $\phi = \A{e}$ for some $A \in
M_{r \times r}(R)$; in fact this statement does not rely on $e
\ge e_0$.   

Moreover, for $\phi = \A{e} \in \sF^e(M)$ and $\psi = \B{e'} \in
\sF^{e'}(M)$, it is routine to verify that $\phi \psi =
(AB^{[p^e]})^{e+e'}$, in which $B^{[p^e]}$ stands for the matrix
derived from $B$ by raising all entries of $B$ to the $p^e$-th power.
In short, $(\A{e})(\B{e'}) = (AB^{[p^e]})^{(e+e')}$.

Now we are ready to prove that $\sF(M)$ is finitely generated over 
$\sF^0(M) = \hom_R(M,M)$. Indeed, we claim that $\sF(M)$, as a ring, 
is generated by $\sF^0(M), \sF^1(M), \dotsc, \sF^{2e_0 - 1}(M)$. (This
would suffice, as each $\sF^i(M)$ is a finitely generated left
$R$-module via the natural ring homomorphism $R \to \hom_R(M,M)$. This
would also prove that $\sF(M)$ is finitely generated over $R$.)
Let $\phi \in \sF^e(M)$ with $e \ge 2e_0$, so that $\phi = \A{e}$ for
some $A \in M_{r \times r}(R)$. Since $e-e_0 \ge e_0$, we see 
\[
\phi = \A{e} = (\A{e_0}) (\mI^{(e-e_0)}).
\]
(Here $\A{e_0} \in \sF^{e_0}(M)$ and $\mI^{(e-e_0)} \in \sF^{e-e_0}(M)$
are regarded as Frobenius actions on $M$, as we set up above.) 
It suffices to verify that $ \mI^{(e-e_0)}$ can be generated by 
$\sF^{e_0}(M), \dotsc, \sF^{2e_0 - 1}(M)$. If $e - e_0 \le 2e_0 -1$,
this is clear. If $e - e_0 \ge 2e_0$, we write $e - e_0 = e_0k + c$
with $1 \le k \in \mathbb Z$ and $e_0 \le c \le 2e_0 -1$, which
implies
\[
 \mI ^{(e-e_0)}= (\mI ^{(e_0k)}) ( \mI^{(c)})
=\left[(\mI ^{(e_0)})^k\right]( \mI^{(c)}).
\]
This completes the proof.
\end{proof}

\begin{Cor}
If $R$ is a $0$-dimensional local ring, then $\sF(E)$ is finitely
generated over $R = \sF^0(E)$. 
\end{Cor}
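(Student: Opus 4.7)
The plan is to reduce the statement directly to Theorem~\ref{zero} applied with $M = E$. First I would verify the finite-length hypothesis: a $0$-dimensional Noetherian local ring $\ringR$ is Artinian, so $\length_R(R) < \infty$, and by Matlis duality we get $\length_R(E) = \length_R(R) < \infty$. (Matlis duality applies here since the maximal ideal $\fm$ is nilpotent in a $0$-dimensional Noetherian local ring, so $R$ is automatically $\fm$-adically complete.) With this in hand, Theorem~\ref{zero} yields that $\sF(E)$ is finitely generated as a ring over $\sF^0(E) = \hom_R(E,E)$.

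It remains to rewrite the conclusion in the claimed form, i.e.\ to observe that $\sF^0(E) = R$. This is again Matlis duality: the natural map $R \to \hom_R(E,E)$ sending $r$ to multiplication by $r$ is an isomorphism whenever $R$ is a complete Noetherian local ring, which we have just noted is automatic in dimension zero. Combining this identification with the previous paragraph gives the statement. The only conceivable obstacle is the identification $\hom_R(E,E) \cong R$, but it comes for free from the nilpotence of $\fm$, so there is essentially no work beyond citing Theorem~\ref{zero}.
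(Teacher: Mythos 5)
Your proof is correct and is essentially the argument the paper intends: the corollary is stated without proof as an immediate consequence of Theorem~\ref{zero}, and your verification that $E$ has finite length and that $\hom_R(E,E)\cong R$ (both via Matlis duality, valid since an Artinian local ring is complete) supplies exactly the routine details being suppressed.
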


\begin{Def}Let $\ringR$ be a local ring. We define the {\it Frobenius
complexity} of the ring $R$ by 
\[
\cx_F(R) = \log_p (\cx(\sF{(E)})).
\]
Also, denote $k_e(R) : = k_e (\sF{(E)})$, for all $e$, and call these
numbers the {\it Frobenius growth sequence} of $R$. Then $c_e= c_e(R):
= k_{e}(R)-k_{e-1}(R)$ defines the {\it Frobenius complexity sequence}
of $R$. If the Frobenius growth sequence of the ring $R$ is eventually
constant, then the Frobenius complexity of $R$ is said to be
$-\infty$. 
If $\cx(\sF(E)) = \infty$, the Frobenius complexity of $R$ is said to
be $\infty$. 

The reader should note that we will not regard $\ringR$ as a
degree-wise finitely generated ring, so the definition of $k_e, c_e$
for $R$ will not conflict with Definition~\ref{degreewise}. 
\end{Def}

\begin{Rem}
\begin{enumerate}
\item
The Frobenius operator algebra $\sF{(E)}$ is finitely generated
over $R$ if and only if the Frobenius complexity of the ring $R$
equals $-\infty$.   

\item
If the Frobenius complexity sequence is bounded but not eventually
zero then the Frobenius complexity of the ring is $0$. 

\item
The completion $R$ of a Stanley-Reisner ring has zero Frobenius
complexity, when $\sF{(E)}$ is not finitely generated, by results of
\`{A}lvarez Montaner, Boix and Zarzuela, namely 
\cite[Proposition 3.4 and 3.1.2]{ABZ}. 

\item
When $\ringR$ is $d$-dimensional and  Gorenstein, we have that $\E =
H^d_{\fm}(R)$, and Remark~\ref{locoh} (2) shows that $\cx_F(R) =
-\infty$. 
\item
If $\ringR$ is normal, $\mathbb{Q}$-Gorenstein and the order of the
canonical module is relatively prime to $p$, $\sF(E)$ is finitely
generated over $\sF^0{(E)}$ by \cite[Proposition~4.1]{KSSZ}. Hence
$\cx_F(R) = -\infty$ in this case. 
(See Theorem~\ref{thm:conj} for the converse.)
 \end{enumerate}
\end{Rem}

\section{T-construction}

Katzman, Schwede, Singh and Zhang have introduced an important $\mathbb{N}$-graded ring in their paper~\cite{KSSZ}, which is an example of an $R$-skew algebra. We will study the complexity of this skew-algebra in this section, and apply these results to the complexity of the ring $R$ in subsequent sections.

First let us review the definition of this ring. Let $\sR$ be an $\mathbb{N}$-graded commutative ring of prime characteristic $p$ with $\sR_0 =R$. 

\begin{Def}
Let  $T_e = \sR_{p^e-1}$ and $T(\sR) = \oplus _e T_e= \oplus_{ e\geq 0} \sR _{p^e-1}$. Define a ring structure on $T(\sR)$ by 
$$ a *b = ab^{p^e},$$
for all $a \in T_e,\, b\in T_{e'}$.
\end{Def}

This operation together with the natural addition inherited from $\sR$ defines a noncommutative $\mathbb{N}$-graded ring. Note that $T_0= R$ and if $a \in T_e, r \in R$, then
$ a *r = a r ^{p^e} = r^{p^e}  a = r^{p^e} * a$, for all $e \geq 0$, and hence $T(\sR)$ is a skew $R$-algebra.

We are now interested in computing the complexity of $T(\sR)$, when $\sR= R[x_1, \ldots, x_d]$ a polynomial ring in $d$ variables over $R$. Note that $T_e=\sR_{p^e-1}$ is an $R$-free module with basis given by monomials of total degree $p^e-1$.

We will use the notations introduced in the previous section. We see
that $G_{e-1} = G_{e-1}(T(\sR))$ is an $R$-free module with basis
consisting of monomials that can be expressed as products (under $*$,
the multiplication of $T(\sR)$) of monomials of degree  $p^i-1$ where
$i \leq e-1$. So the $R$-basis of $(G_{e-1})_e$ consists of these
monomials of total degree $p^e-1$. 

In conclusion the $R$-module $\frac{T_e}{(G_{e-1})_e}$ is $R$-free
with a basis given by monomials of degree $p^e-1$ which cannot be
written as products of monomials of degree $p^i-1$, with $i \leq
e-1$. We will refer to this basis as the \emph{monomial basis} of
$\frac{T_e}{(G_{e-1})_e}$.

We introduce the following notation $c_{d,e} : = \rank_R(\frac{T_e}{(G_{e-1})_e})$. By Corollary~\ref{interpretation}, we see that $c_{d,e}= c_e(T(\sR))$.

Let $m = x_1^{a_1} \cdots x_d^{a_d}$ be a monomial in $T_e$, that is, a monomial of degree $p^e-1$. This monomial $m$ belongs to $(G_{e-1})_e$ if and only if it can be decomposed as 
$m = m' * m''$, where $m' \in T_{e_1},\, m''\in T_{e_2}$ with $1 \leq e_i < e$ and $e_1 +e_2 =e$.

In other words, $m \in (G_{e-1})_e$ if and only if there is a decomposition $$m = (x_1^{a'_1} \cdots x_d^{a'_d}) * (x_1^{a''_1} \cdots x_d^{a''_d}) = \prod_{i=1}^d x_i ^{a'_i + p^{e_1} \cdot a''_i},$$
for some $1 \leq e_1 <e,\, 1\leq e_2<e,\, e_1+e_2=e,\,\sum a'_i
=p^{e_1}-1,\, \sum a''_i = p^{e_2}-1$. 

At this stage it is helpful to introduce several notations:
For an integer $a \in \mathbb N$, if $a= c_{n}p^{n} + \cdots + c_1p +
c_0$ with $0 \leq c_i \leq p-1$ for all $0 \le i \le n$,
then we use $a = \overline{c_{n} \cdots c_0}$ to denote the base $p$
expression of $a$. 
Also, we write $a \trunc_e$ to denote the remainder of $a$
when divided by $p^e$. Thus, if $a = \overline{c_{n} \cdots c_0}$
then $a \trunc_e = \overline{c_{e-1} \cdots c_0}$ 
and we refer to this number as the $e$th truncation of $a$.

Therefore, for $m = x_1^{a_1} \cdots x_d^{a_d} \in T_e$, 
there is a decomposition 
$$m = (x_1^{a'_1} \cdots x_d^{a'_d}) * (x_1^{a''_1} \cdots x_d^{a''_d}) = \prod_{i=1}^d x_i ^{a'_i + p^{e_1} \cdot a''_i},$$
for some $1 \leq e_1 <e,\, 1\leq e_2<e,\, e_1+e_2=e$, $\sum a'_i
=p^{e_1}-1,\, \sum a''_i = p^{e_2}-1$  
if and only if there exists an integer $1 \leq e_1 \leq e-1$  such that
$$ a_1 \trunc_{e_1} + \cdots + a_d \trunc_{e_1} = p^{e_1}-1.$$ 
It can readily be seen that this is further equivalent to
$$ a_1 \trunc_{e_1} + \cdots + a_{d-1} \trunc_{e_{1}} \leq p^{e_1}-1,$$
by dropping the part involving $a_d$. (For the very last equivalence,
the forward implication is trivial. For the backward direction, assume 
$a_1 \trunc_{e_1} + \cdots + a_{d-1} \trunc_{e_{1}} \leq p^{e_1}-1$, which
readily yields 
\[
a_1 \trunc_{e_1} + \cdots + a_{d} \trunc_{e_{1}} \le p^{e_1}-1+a_{d} \trunc_{e_{1}}
\le p^{e_1}-1+p^{e_1}-1. \tag{$\dagger$}
\]
Also note that $a_i \trunc_{e_1} \equiv a_i \mod p^{e_1}$ for all
$i = 1,\,\dotsc,\,d$. Thus the blanket assumption 
$a_1 + \cdots + a_{d} = p^{e}-1$ implies 
\[
a_1 \trunc_{e_1} + \cdots + a_{d} \trunc_{e_{1}} \equiv (a_1 + \cdots +
a_{d})\trunc_{e_{1}}= (p^{e}-1)\trunc_{e_{1}} \equiv p^{e_1}-1 \mod
p^{e_1}. \tag{$\ddagger$}
\] 
With $(\dagger)$ and $(\ddagger)$, the only possible choice for 
$a_1 \trunc_{e_1} + \cdots + a_{d} \trunc_{e_{1}}$ is $p^{e_1}-1$.)

In conclusion, we have the following 

\begin{Prop} \label{prop:bad}
A monomial $m = x_1^{a_1} \cdots x_d^{a_d}$ 
in $T_e$ gives an element of the monomial basis of 
$\frac{T_e}{(G_{e-1})_e}$ if and only if, for all $1 \leq e_1 <e$, 
\[
a_1 \trunc_{e_1} + \cdots + a_{d-1} \trunc_{e_{1}} + a_{d} \trunc_{e_{1}} \geq p^{e_1}
\]
if and only if, for all $1 \leq e_1 <e$,
$$ 
a_1 \trunc_{e_1} + \cdots + a_{d-1} \trunc_{e_{1}} \geq p^{e_1}.
$$
\end{Prop}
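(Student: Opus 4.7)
The statement is essentially the negation of what has already been established in the discussion leading up to it, so my plan is to package the preceding arguments into a clean contrapositive.

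First, by definition, the monomial $m = x_1^{a_1}\cdots x_d^{a_d}$ gives an element of the monomial basis of $T_e/(G_{e-1})_e$ if and only if $m \notin (G_{e-1})_e$. From the analysis just before the proposition, $m \in (G_{e-1})_e$ is equivalent to: there exists $1 \leq e_1 \leq e-1$ such that $a_1\trun_{e_1} + \cdots + a_d\trun_{e_1} = p^{e_1}-1$, which by the established equivalence is in turn the same as the existence of $1 \leq e_1 \leq e-1$ with $a_1\trun_{e_1} + \cdots + a_{d-1}\trun_{e_1} \leq p^{e_1}-1$. The second equivalence in the proposition is then immediate by taking the contrapositive of this last condition: $m$ lies in the monomial basis precisely when for every $1 \leq e_1 < e$ one has $a_1\trun_{e_1} + \cdots + a_{d-1}\trun_{e_1} \geq p^{e_1}$.

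For the first equivalence, I would start from the congruence $(\ddagger)$ already established, namely
\[
a_1\trun_{e_1} + \cdots + a_d\trun_{e_1} \equiv p^{e_1}-1 \pmod{p^{e_1}}.
\]
Since this sum is a nonnegative integer congruent to $p^{e_1}-1$ modulo $p^{e_1}$, it lies in the set $\{p^{e_1}-1,\, 2p^{e_1}-1,\, 3p^{e_1}-1, \ldots\}$. Therefore the sum either equals $p^{e_1}-1$ or is at least $2p^{e_1}-1$, and in the latter case it exceeds $p^{e_1}$. Consequently, the condition ``$a_1\trun_{e_1} + \cdots + a_d\trun_{e_1} \neq p^{e_1}-1$'' is equivalent to ``$a_1\trun_{e_1} + \cdots + a_d\trun_{e_1} \geq p^{e_1}$''. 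Taking the contrapositive of the first characterization of membership in $(G_{e-1})_e$ then yields exactly the first stated condition: for all $1 \leq e_1 < e$,
\[
a_1\trun_{e_1} + \cdots + a_{d-1}\trun_{e_1} + a_d\trun_{e_1} \geq p^{e_1}.
\]

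No substantial new ingredient is required, since the heavy lifting — the equivalence between a nontrivial decomposition of $m$ and the existence of a ``bad'' level $e_1$, together with the trimming of $a_d$ via $(\dagger)$ and $(\ddagger)$ — has already been carried out. The only subtlety to watch is to phrase the contrapositive with care, and to invoke the mod-$p^{e_1}$ congruence to rule out intermediate values between $p^{e_1}-1$ and $2p^{e_1}-1$, which is what allows the strict bound $\geq p^{e_1}$ rather than merely $\neq p^{e_1}-1$.
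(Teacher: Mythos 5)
Your proof is correct and follows essentially the same route as the paper, which likewise treats the proposition as the contrapositive of the preceding characterization of membership in $(G_{e-1})_e$, using $(\ddagger)$ to upgrade ``$\neq p^{e_1}-1$'' to ``$\geq p^{e_1}$'' for the full sum and direct negation for the truncated sum. No gaps.
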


Recall that for $a = \overline{c_{n} \cdots c_0}$, its $e_1$th
truncation is 
$$a \trunc_{e_1} = c_{e_1-1}p^{e_1-1}+ \cdots + c_0
= \overline{c_{{e_1}-1} \cdots c_0}.$$ 
This leads to the following reformulation of the result obtained above:

\begin{Prop}
\label{count}
A monomial $m =x_1^{a_1} \cdots x_d^{a_d}$ of $T_e$ gives an element
of the monomial basis of $\frac{T_e}{(G_{e-1})_e}$ if and only if, for
all $1 \leq e_1 <e$, the sum of the $e_1$th truncation of $a_i$,
$1\leq i \leq d$ carries over (to the digit corresponding to
$p^{e_1}$) in base $p$
if and only if, for
all $1 \leq e_1 <e$, the sum of the $e_1$th truncation of $a_i$,
$1\leq i \leq d-1$ carries over (to the digit corresponding to
$p^{e_1}$) in base $p$.
\end{Prop}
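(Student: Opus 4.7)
The plan is to derive Proposition~\ref{count} as a direct reformulation of Proposition~\ref{prop:bad}, using just one simple observation about base-$p$ arithmetic. Since Proposition~\ref{prop:bad} already supplies the two inequality conditions and asserts their equivalence to membership in the monomial basis, the only content left is to translate those inequalities into the language of carrying in base $p$.

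The observation I would use is that, by definition, $a_i \trunc_{e_1}$ is the residue of $a_i$ modulo $p^{e_1}$, so $0 \le a_i \trunc_{e_1} < p^{e_1}$; equivalently, the base-$p$ expansion of each $a_i \trunc_{e_1}$ has a zero in every digit of place value $\ge p^{e_1}$. Therefore, when $a_1 \trunc_{e_1},\ldots,a_d \trunc_{e_1}$ are added, none of the individual summands contributes anything at position $p^{e_1}$, and any nonzero digit that appears at that position in the total must originate as a carry out of the lower digits. Consequently, the addition carries over to the digit corresponding to $p^{e_1}$ if and only if $\sum_{i=1}^d a_i \trunc_{e_1} \ge p^{e_1}$. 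The identical reasoning applied to the sum of the first $d-1$ truncations yields the analogous equivalence with the second inequality of Proposition~\ref{prop:bad}.

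With these two translations in hand, Proposition~\ref{count} is immediate from Proposition~\ref{prop:bad}: the two biconditionals stated there about inequalities become exactly the two biconditionals stated here about carrying. No real obstacle arises; the only subtlety worth noting is to verify that "carries over to the digit corresponding to $p^{e_1}$" really does encode "the sum is at least $p^{e_1}$" even when the sum is large enough to propagate further carries into higher positions. This is clear, however, because every summand is strictly less than $p^{e_1}$, so a carry out of the $p^{e_1-1}$-digit is produced precisely when the total exceeds $p^{e_1}-1$, regardless of what happens further up.
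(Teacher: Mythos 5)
Your proposal is correct and matches the paper's (implicit) argument: the paper presents Proposition~\ref{count} as a direct reformulation of Proposition~\ref{prop:bad} with no written proof, and the only content needed is exactly the observation you supply, namely that since each $a_i\trunc_{e_1}<p^{e_1}$, the addition produces a carry into the $p^{e_1}$ digit precisely when the total is at least $p^{e_1}$. Your write-up just makes explicit the arithmetic detail the paper leaves to the reader.
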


\begin{Prop}\label{prop:3variables}
Let $p \geq 2$ be prime and $d=3$.
Then $$c_{3, e} 
= \sum_{0 \leq n_0, \ldots, n_{e-1} \leq p-1} n_0(n_1+1)\cdots (n_{e-2}+1)n_{e-1} 
= (1/2)^e p^e (p-1)^2(p+1)^{e-2}.$$
\end{Prop}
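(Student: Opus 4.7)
The plan is to translate the criterion of Proposition~\ref{count} into a constraint on base-$p$ carries in the sum $a_1 + a_2$, and then factor the resulting count into single-digit sums.

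First, Proposition~\ref{count} says that a monomial $x_1^{a_1}x_2^{a_2}x_3^{a_3}$ of total degree $p^e-1$ lies in the monomial basis of $T_e/(G_{e-1})_e$ exactly when, for every $1 \le e_1 \le e-1$, $a_1\trunc_{e_1} + a_2\trunc_{e_1} \ge p^{e_1}$. Equivalently, if $u_j$ denotes the carry into position $j$ when one computes $a_1 + a_2$ digit-by-digit in base $p$ (so $u_0 = 0$), the condition becomes $u_j = 1$ for all $j = 1, \dotsc, e-1$. The constraint $a_3 \ge 0$, i.e. $a_1 + a_2 \le p^e - 1$, forces $u_e = 0$.

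Next, I would parametrize by the base-$p$ digits $d_0, \dotsc, d_{e-1}$ of $s := a_1 + a_2$. Since $p^e - 1$ has every base-$p$ digit equal to $p-1$, the digits of $a_3 = p^e - 1 - s$ are automatically $p - 1 - d_j$ and are always valid. Writing $c_{i,j}$ for the $j$th digit of $a_i$, the carry equations force: at $j = 0$, $c_{1,0} + c_{2,0} = p + d_0$ with $d_0 \in \{0, \dotsc, p-2\}$, giving $p-1-d_0$ admissible pairs; at $1 \le j \le e-2$, $c_{1,j} + c_{2,j} = p-1+d_j$ with $d_j \in \{0, \dotsc, p-1\}$, giving $p - d_j$ pairs; at $j = e-1$, $c_{1,e-1} + c_{2,e-1} = d_{e-1} - 1$ with $d_{e-1} \in \{1, \dotsc, p-1\}$, giving $d_{e-1}$ pairs.

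Since choices at distinct positions are independent, $c_{3,e}$ is the sum over admissible digit tuples of the product of these position counts. The change of variables $n_j := p-1-d_j$ for $0 \le j \le e-2$ and $n_{e-1} := d_{e-1}$ rewrites these counts as $n_0$, $n_j + 1$, and $n_{e-1}$ respectively; extending the ranges of $n_0$ and $n_{e-1}$ down to $0$ is harmless because the added terms carry a factor $n_0$ or $n_{e-1}$ equal to $0$. This yields the first equality. For the second, the sum factors into three independent single-index sums; evaluating $\sum_{n=0}^{p-1} n = p(p-1)/2$ and $\sum_{n=0}^{p-1}(n+1) = p(p+1)/2$ and multiplying produces the closed form $(1/2)^e p^e (p-1)^2 (p+1)^{e-2}$.

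The main obstacle is the careful carry bookkeeping at the three regimes of positions — the bottom ($j=0$, no carry in, carry out $1$), the interior ($1 \le j \le e-2$, carry in and out both $1$), and the top ($j=e-1$, carry in $1$, carry out $0$) — and in particular verifying that the highest digit contributes a factor of $d_{e-1}$ rather than $p-d_{e-1}$ and the lowest excludes $d_0 = p-1$. Once those three per-position counts are pinned down, the substitution, factorization, and arithmetic are routine.
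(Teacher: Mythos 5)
Your proof is correct and follows essentially the same route as the paper's: both reduce, via Proposition~\ref{count}, to a digit-by-digit carry condition and then count position by position, relabel, and factor the resulting product of single-digit sums. The only difference is bookkeeping --- the paper fixes the base-$p$ digits of the first exponent and counts admissible second exponents (per-position factors $c_0$, $c_r+1$, $p-1-c_{e-1}$), whereas you fix the digits of $a_1+a_2$ and count ordered pairs realizing them (the mirror-image factors $p-1-d_0$, $p-d_j$, $d_{e-1}$), and after the change of variables both give the identical sum.
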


\begin{proof}
We are going to use Proposition~\ref{count} to find the number of
$x^iy^jz^k \in T_e = (T(R[x,y,z]))_e$ that form the monomial basis of
$\frac{T_e}{(G_{e-1})_e}$. Note that $d-1=2$, so for 
each $ 0 \leq i \leq p^e-1$ with $i=\overline{c_{e-1} \cdots c_0}$, 
we need to count the number of $j = \overline{a_{e-1} \cdots a_0}$ such that 
$$ c_0 + a_0 \geq p,$$
$$c_r + a_r \geq p-1 \text{ for } 1 \leq r \leq e-2,$$
$$c_{e-1} + a_{e-1} \leq p-2.$$
This number can be calculated to be 
$c_0(c_1+1) \cdots (c_{e-2}+1)(p-1-c_{e-1})$. 
So we have
\begin{align*}
c_{3, e} 
&=\sum_{0 \leq c_0, \ldots, c_{e-1} \leq p-1}c_0(c_1+1) \cdots (c_{e-2}+1)(p-1-c_{e-1})\\
&=\sum_{0 \leq n_0, \ldots, n_{e-1} \leq
  p-1}n_0(n_1+1)\cdots(n_{e-2}+1) n_{e-1} \qquad \text{(by relabeling)}\\
&=\sum_{n_0 = 0}^{p-1}n_0 \cdot \sum_{n_1 = 1}^{p}n_1 
\dotsm \sum_{n_{e-2} =1}^{p}n_{e-2} \cdot \sum_{n_{e-1} =0}^{p-1}n_{e-1} \\
&=(p(p-1)/2)^2((p+1)p/2)^{e-2}\\
&=(1/2)^e p^e (p-1)^2(p+1)^{e-2}.\qedhere
\end{align*}
\end{proof}

\begin{Cor}\label{cor:3variables}
Let $p \geq 2$ be prime and $\sR = R[x_1, x_2, x_3]$. 
Then $\cx(T(\sR)) = \frac{p(p+1)}2 = \binom{p+1}{2}$.
\end{Cor}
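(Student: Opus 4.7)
The plan is to apply Proposition~\ref{prop:3variables} directly and then extract the base of exponential growth. Recall that $c_e(T(\sR)) = c_{3,e}$ by Corollary~\ref{interpretation}, and the explicit formula gives
\[
c_{3,e} = \left(\tfrac{1}{2}\right)^e p^e (p-1)^2 (p+1)^{e-2}.
\]

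First, I would rewrite this in the form \emph{(constant)} $\times$ \emph{(base)}$^e$. A routine regrouping yields
\[
c_{3,e} = \frac{(p-1)^2}{(p+1)^2} \cdot \left(\frac{p(p+1)}{2}\right)^{\!e}.
\]
Setting $\alpha = \frac{p(p+1)}{2}$ and $C = \frac{(p-1)^2}{(p+1)^2}$ (a positive constant depending only on $p$), we have $c_{3,e} = C\alpha^e$.

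Next, I would check both inequalities defining the infimum in the definition of complexity. For any real $n \ge \alpha$, we have $c_{3,e}/n^e \le C$ for all $e \ge 2$, so $c_{3,e} = O(n^e)$. Conversely, for any $0 < n < \alpha$, the ratio $c_{3,e}/n^e = C(\alpha/n)^e$ grows without bound as $e \to \infty$, so $c_{3,e} \neq O(n^e)$. Therefore
\[
\cx(T(\sR)) = \inf\{n \in \mathbb{R}_{>0} : c_e(T(\sR)) = O(n^e)\} = \alpha = \frac{p(p+1)}{2} = \binom{p+1}{2}.
\]

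There is no real obstacle here: the entire content is Proposition~\ref{prop:3variables}, which already hides the combinatorial work. The corollary is a one-line algebraic repackaging of that formula together with the observation that an expression of the form $C\alpha^e$ with $C > 0$ satisfies $\cx = \alpha$ exactly.
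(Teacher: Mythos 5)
Your proposal is correct and follows exactly the paper's route: the paper's own proof of Corollary~\ref{cor:3variables} is just the remark that $\cx(T(\sR))=\inf\{n>0 : c_{3,e}=O(n^e)\}$ followed by ``a quick check,'' and your rewriting $c_{3,e}=\frac{(p-1)^2}{(p+1)^2}\left(\frac{p(p+1)}{2}\right)^{e}$ with the two-sided verification of the infimum is precisely that check made explicit.
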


\begin{proof}
Note that by definition 
$$\cx(T(\sR)=\inf \{ n >0 : c_{3,e} = O(n^e)\}.$$ 
A quick check verifies the claim.
\end{proof}

\begin{Prop}\label{more_variables}
We have the following formulas concerning
$c_{d,e}= c_e(T(R[x_1, \dotsc, x_d]))$:
\begin{enumerate}
\item When $e=1$, $c_{d,1} = \binom{d+p-2}{p-1}$.
\item
If $0 \leq d \leq 2$, then $c_{d,e}=0$ for all $e \ge 2$.
\item
If $d \geq 3$, then for $e \geq 1$
$$c_{d,e} \geq  \sum_{i=0}^{p^e-1}  \xi_{e}(i) \binom{d-3+i}{i},$$ where
$\xi_e(i) = (p-1-c_{e-1})(c_{e-2}+1)\cdots(c_2+1)(c_1+1)c_0$ if $i
=\overline{c_{e-1}\cdots c_0}$ in base $p$. 
\end{enumerate}
\end{Prop}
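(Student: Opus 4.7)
The three parts will be handled separately. For (1), when $e = 1$ the range $1 \le e_1 < e$ in Proposition~\ref{prop:bad} is empty, so every monomial of total degree $p - 1$ qualifies as a basis element, and $(G_0)_1 = 0$. A stars-and-bars count then gives $c_{d,1} = \binom{d + p - 2}{p - 1}$. For (2), I will invoke the second equivalent form in Proposition~\ref{prop:bad}, whose criterion involves only $a_1\trunc_{e_1} + \cdots + a_{d-1}\trunc_{e_1}$. When $d \le 2$, this sum has at most one term, so it is strictly less than $p^{e_1}$ and the criterion fails; hence no monomial qualifies and $c_{d,e} = 0$ for all $e \ge 2$.

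For (3), the plan is to exhibit an explicit family of monomial basis elements indexed by triples $(L, (b_1, \ldots, b_{d-2}), j)$, where $L$ ranges over $\{0, 1, \ldots, p^e - 1\}$, the tuple $(b_1, \ldots, b_{d-2})$ is an ordered weak composition of $L$ into $d - 2$ non-negative parts (yielding $\binom{d-3+L}{L}$ choices), and $j$ is a value such that $(L, j)$ satisfies the digit inequalities used in the proof of Proposition~\ref{prop:3variables} (yielding $\xi_e(L)$ choices). To such a triple I associate the monomial
\[
m \;=\; x_1^{b_1}\cdots x_{d-2}^{b_{d-2}}\, x_{d-1}^{j}\, x_d^{\,p^e - 1 - L - j}.
\]

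Three verifications will then be carried out. First, $p^e - 1 - L - j \ge 0$, which follows from the inequality $L + j \le p^e - 2$ established in the proof of Proposition~\ref{prop:3variables} under the same digit conditions. Second, $m$ gives an element of the monomial basis of $T_e/(G_{e-1})_e$; by Proposition~\ref{prop:bad}, this reduces to proving, for each $1 \le e_1 < e$,
\[
\sum_{i=1}^{d-2} b_i\trunc_{e_1} + j\trunc_{e_1} \;\ge\; L\trunc_{e_1} + j\trunc_{e_1} \;\ge\; p^{e_1}.
\]
The second inequality is exactly the carry condition for the $d = 3$ pair $(L, j)$, and the first follows from the modular identity $\sum_i b_i\trunc_{e_1} \equiv L \equiv L\trunc_{e_1} \pmod{p^{e_1}}$ combined with non-negativity, which forces the difference $\sum_i b_i\trunc_{e_1} - L\trunc_{e_1}$ to be a non-negative multiple of $p^{e_1}$. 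Third, distinct triples yield distinct monomials, since $L = a_1 + \cdots + a_{d-2}$, the composition $(b_i) = (a_i)_{i \le d-2}$, and $j = a_{d-1}$ can all be read off from the exponent vector. Summing over all admissible triples gives the desired lower bound $\sum_{L=0}^{p^e-1} \xi_e(L) \binom{d-3+L}{L}$.

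The main obstacle is the second verification: one must confirm that spreading the mass $L$ across $d - 2$ variables does not destroy the carries already produced by the $d = 3$ pair $(L, j)$. The monotonicity "sum of truncations $\ge$ truncation of sum" described above is the crucial ingredient and parallels the calculation marked $(\ddagger)$ in the discussion preceding Proposition~\ref{prop:bad}.
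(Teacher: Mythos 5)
Your proposal is correct and takes essentially the same route as the paper: parts (1) and (2) are read off from Proposition~\ref{prop:bad}, and for part (3) you produce the lower bound by lifting each $d=3$ basis monomial $x^Ly^jz^k$ to the $\binom{d-3+L}{L}$ monomials obtained by distributing the exponent $L$ over the first $d-2$ variables, which is exactly the paper's construction. The only difference is cosmetic: where the paper justifies the lifting step by ``studying the contrapositive,'' you verify it directly via the congruence argument giving $\sum_{i} b_i\trunc_{e_1} \ge L\trunc_{e_1}$, which is a valid and slightly more explicit version of the same step.
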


\begin{proof}
(1) By the definition of $c_{d,e}$, we see that $c_{d,1}$ is the number
of monomials of degree $p-1$ in $d$ variables, which is
$\binom{d+p-2}{p-1}$. 

(2) This follows from Proposition~\ref{prop:bad} or Proposition~\ref{count}.

(3) First, recall that in the proof of
Proposition~\ref{prop:3variables} where $d = 
3$, for each $e \ge 1$ and for each $i$ between $0$ and $p^e-1$, there are
$\xi_e(i)$ many monomials of the form $x^iy^jz^k$ that constitute part of
the monomial basis of $\frac{T_e}{(G_{e-1})_e}$ for $T(R[x,y,z])$. 

Second, note that if a monomial $x^iy^jz^k \in T(R[x,y,z])_e$ is in
the monomial basis for 
$\frac{T_e}{(G_{e-1})_e}$ for $T(R[x,y,z])$ then any monomial 
$x_1^{a_1}\dotsm x_{d-2}^{a_{d-2}}x_{d-1}^jx_d^k 
\in T(R[x_1,\dotsc, x_{d-2}, x_{d-1},x_d])_e$ with $a_1 + \dotsb +
a_{d-2} = i$ is in the monomial basis of $\frac{T_e}{(G_{e-1})_e}$ for
$T(R[x_1,\dotsc, x_{d-2}, x_{d-1},x_d])$. 
(To see this, study the contrapositive.)

Finally, observe that there are precisely $\binom{d-3+i}{i}$ many
strings $(a_1, \dotsc, a_{d-2}) \in \mathbb N^{d-2}$ such that 
$a_1 + \dotsb + a_{d-2} = i$. 

Combining the above, we conclude that there are at least 
$\sum_{i=0}^{p^e-1}  \xi_{e}(i) \binom{d-3+i}{i}$ many
monomials in $T(R[x_1,\dotsc, x_{d-2}, x_{d-1},x_d])_e$ that form a
portion of the monomial basis of $\frac{T_e}{(G_{e-1})_e}$ for
$T(R[x_1,\dotsc, x_{d-2}, x_{d-1},x_d])$. 
Hence the inequality.
\end{proof}

Next, we are going to give another way to formulate $c_{d,e}=
c_e(T(\sR))$ for $\sR = R[x_1, \dotsc, x_d]$. 

\begin{Prop}\label{prop:d}
For $\sR = R[x_1, \dotsc, x_d]$, we have the following formula for
$c_{d,e}= c_e(T(\sR))$: 
\[
c_{d,e}= 
\sum_{\substack{(d_{e-1}=0,\, d_{e-2},\, \dotsc,\, d_0,\, d_{-1}=0) \in \mathbb N^{e+1}
\\d_n >  0 \text{ for } 0\le n < e-1}}
\prod_{n=0}^{e-1} M_d(d_np - d_{n-1} + p-1),\; \forall e \ge 1,
\]
in which $M_d(m)$ stands for the rank of $(R[x_1,
\dotsc, x_d]/(x_1^p, \dotsc, x_d^p))_m$ as an $R$-module. 
\end{Prop}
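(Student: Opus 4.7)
The plan is to count, by Proposition~\ref{count}, the monomials $x_1^{a_1}\cdots x_d^{a_d}$ of degree $p^e-1$ subject to the truncation conditions, by reparametrizing everything in terms of base-$p$ digits and the carries that occur when summing the $a_i$. Write $a_i = \sum_{n=0}^{e-1} c_{i,n}p^n$ with $0 \le c_{i,n} \le p-1$, and set $s_n = \sum_{i=1}^d c_{i,n}$. Since $\sum_i a_i = p^e - 1$, performing the base-$p$ addition column by column forces the output digit at position $n$ to be $p-1$, so if $d_n$ denotes the carry out of column $n$ (with $d_{-1}=0$ and $d_{e-1}=0$), we must have
\[
s_n + d_{n-1} = d_n p + (p-1), \quad \text{i.e.,} \quad s_n = d_np - d_{n-1} + p - 1,
\]
for every $0 \le n \le e-1$.

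Next I translate the truncation conditions into conditions on the carries. A direct computation using the above recursion yields the telescoping identity
\[
\sum_{i=1}^d a_i \trunc_{e_1} = \sum_{n=0}^{e_1-1} s_n p^n
= \sum_{n=0}^{e_1-1}\bigl(d_n p - d_{n-1} + p-1\bigr) p^n
= (d_{e_1-1}+1)p^{e_1} - 1,
\]
using $d_{-1}=0$. Therefore the condition $\sum_i a_i\trunc_{e_1} \ge p^{e_1}$ for all $1 \le e_1 \le e-1$ (from Proposition~\ref{prop:bad}) is exactly the condition $d_n > 0$ for $0 \le n \le e-2$. This identifies the carry sequences $(d_{-1}=0, d_0, \dotsc, d_{e-2}, d_{e-1}=0)$ with $d_n > 0$ for $0 \le n < e-1$ as the admissible ones.

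Finally, for each admissible carry sequence, the monomials $x_1^{a_1}\cdots x_d^{a_d}$ realizing those carries are in bijection with choices of digits $(c_{i,n})_{1 \le i \le d,\, 0 \le n \le e-1}$ such that $0 \le c_{i,n} \le p-1$ and $\sum_i c_{i,n} = s_n$ for each $n$. Since these conditions decouple across the positions $n$, the count at position $n$ is exactly $M_d(s_n) = M_d(d_np - d_{n-1} + p-1)$, by the very definition of $M_d(m)$ as the rank of $(R[x_1,\dotsc,x_d]/(x_1^p,\dotsc,x_d^p))_m$. Multiplying over $n$ and summing over all admissible carry sequences yields the formula. The only substantive step is the telescoping identity, which is routine; the conceptual content is the reformulation of the truncation conditions as positivity of carries.
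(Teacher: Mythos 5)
Your proof is correct and follows essentially the same route as the paper: both parametrize monomials by their base-$p$ digit columns, group them by the carry sequence $(d_n)$, translate the truncation condition of Proposition~\ref{prop:bad} into positivity of the carries, and count each column independently via $M_d(d_np - d_{n-1} + p-1)$. Your explicit telescoping computation of $\sum_i a_i\trunc_{e_1} = (d_{e_1-1}+1)p^{e_1}-1$ just makes precise a step the paper labels ``straightforward to see.''
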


\begin{proof}
It suffices to count the number of monomials that produce
the monomial basis of $\frac{T_e}{(G_{e-1})_e}$, in which $T$ is short
for $T(\sR) = T(R[x_1, \dotsc, x_d])$. 
Let $1 \le e \in \mathbb N$ and 
$\ul x^{\ul a} := x_1^{a_1} \dotsm x_d^{a_d} \in T_e$, in which
$\ul a := (a_1, \dotsc, a_d) \in \mathbb N^d$ with $|\ul a| := a_1 +
\dotsb + a_d = p^e -1$. 
For each $i \in \{1,\dotsc,d\}$, write 
$a_i = \ol{a_{i,e-1} \dotsb a_{i,0}}$ 
in base $p$ expression. Then, for each $0 \le n \le e-1$, denote 
$\ul a_n := (a_{1,n}, \dotsc, a_{d, n})$, which can be referred to as
the vector of digits corresponding to $p^n$. Also, for each $0 \le n
\le e-1$, let $d_n(\ul a)$ denote the (accumulated) carry-over to the digit
corresponding to $p^{n+1}$ when computing $\sum_{i=1}^da_i$ in base $p$. 
In other words, $d_n(\ul a)$ is the carry-over to the digit
corresponding to $p^{n+1}$ when computing $a_1 \trunc_{n+1} + \dotsb +
a_d \trunc_{n+1}$. Denote $d(\ul a) :=(d_{e-1}(\ul a), \dotsc, d_{0}(\ul a))$.
Note that $d_{e-1}(\ul a) = 0$ since $|\ul a| = a_1 +
\dotsb + a_d = p^e -1$.

Given $\ul x^{\ul a} \in T_e$ and $\ul d = (d_{e-1}, d_{e-2} , \dotsc,
d_0)$ with $d_{e-1} = 0$, 
it is straightforward to see that $d(\ul a) = \ul d$ if and only if 
\[
|\ul a_n| = d_np - d_{n-1} + p-1 \quad 
\text{for all} \quad n \in \{0, \dotsc, e-1\},
\]
in which we agree that $d_{-1} = 0$. 

By Proposition~\ref{count}, the image of $\ul x^{\ul a}$ is an element
of the monomial basis of $\frac{T_e}{(G_{e-1})_e}$ if and only if 
\[
d_n(\ul a) > 0 \quad \text{for} \quad 0 \le n < e-1, 
\]
given that $|\ul a| = p^e -1$ and (hence) $d_{e-1}(\ul a) = 0$. 

Therefore, we can formulate $c_{d,e}=c_e(T(R[x_1, \dotsc, x_d]))$, 
$e \ge 1$, as follows 
(with the agreement $d_{-1} = 0 = d_{e-1}$ and $|\ul a| = p^e-1$):
\begin{align*}
c_{d,e} & 
= \sum_{\substack{(d_{e-1}=0,\, d_{e-2},\, \dotsc,\, d_0,\, d_{-1}=0) \in \mathbb N^{e+1}
\\d_n >  0 \text{ for } 0 \le n < e-1}} 
\left|\{\ul a \in \mathbb N^d : 
d(\ul a) = (d_{e-1}, d_{e-2} , \dotsc, d_0)
\}\right| \\
& = \sum_{\substack{(d_{e-1}=0,\, d_{e-2},\, \dotsc,\, d_0,\, d_{-1}=0) \in \mathbb N^{e+1}
\\d_n >  0 \text{ for } 0 \le n < e-1}} 
\left|\{\ul a \in \mathbb N^d : 
|\ul a_n| = d_np - d_{n-1} + p-1 \text{ for } 0 \le n \le e-1
\}\right| \\
& = \sum_{\substack{(d_{e-1}=0,\, d_{e-2},\, \dotsc,\, d_0,\, d_{-1}=0) \in \mathbb N^{e+1}
\\d_n >  0 \text{ for } 0 \le n < e-1}}
\prod_{n=0}^{e-1} \left|\{\ul a_n \in [0,\,p-1]^d : 
|\ul a_n| = d_np - d_{n-1} + p-1\}\right|\\
& = \sum_{\substack{(d_{e-1}=0,\, d_{e-2},\, \dotsc,\, d_0,\, d_{-1}=0) \in \mathbb N^{e+1}
\\d_n >  0 \text{ for } 0 \le n < e-1}}
\prod_{n=0}^{e-1} M_d(d_np - d_{n-1} + p-1),
\end{align*}
in which $[0,p-1]:= \{0,\dotsc, p-1\}$ while, for any set $X$, $|X|$
stands for the cardinality of $X$.  
(Note that, in each of the summations above, almost all summands are zero.)
\end{proof}

Next, we outline a method that allows us compute $c_{d,e} =
c_e(T(R[x_1,\,\dotsc,\,x_d]))$ for any $d \ge 3$, in which $R$ may have
any prime characteristic $p$. (Note that, if $d \le 2$, then $c_{d,e}
= 0$ for all $e \ge 2$.)

\begin{Dis}\label{d,p}
Fix any $3 \le d \in \mathbb N$, any prime number $p$, and any ring $R$ with
characteristic $p$. Let $\sR=R[x_1,\,\dotsc,\,x_d]$.
We can determine $c_{d,e} = c_e(T(\sR))$ explicitly as follows:

Since $M_d(m)$ stands for the rank of 
$(R[x_1,\,\dotsc,\, x_d]/(x_1^p,\,\dotsc,\, x_d^p))_m$ over $R$, 
we see $M_d(m)=0$ whenever $m > d(p-1)$ or $m < 0$. 
In fact, all $M_d(m)$ can be read off from the following Poincar\'e
series (actually a polynomial):
\[
\sum_{m=0}^{\infty} M_d(m)t^m = \left(\frac{1-t^p}{1-t}\right)^d
=\left(1+\dotsb + t^{p-1}\right)^d.
\]

By Proposition~\ref{prop:d}, we have
\[
c_{d,e}=c_{e}(T(\sR))= 
\sum_{\substack{(d_{e-1}=0,\, d_{e-2},\, \dotsc,\, d_0,\, d_{-1}=0) \in \mathbb N^{e+1}
\\d_n >  0 \text{ for } 0 \le n < e-1}}
\prod_{n=0}^{e-1} M_d(pd_n - d_{n-1} + p-1),
\]
in which the nonzero summands come from
$(d_{e-1}=0,\, d_{e-2},\, \dotsc,\, d_0,\, d_{-1}=0) \in \mathbb N^{e+1}$
such that $d_n >0$ and  $0 \leq pd_n - d_{n-1} + p-1
\leq d(p-1)$ for all $0 \le n < e-1$. 

From $d_n > 0$ and $0 \leq pd_n - d_{n-1} + p-1 \leq d(p-1)$, 
$0 \le n < e-1$, we see 
$0< pd_n\leq (d-1)(p-1)+d_{n-1}$, which subsequently (and inductively)
implies that there is a uniform upper bound for all possible $d_n$. 
In fact, it is not hard to see that 
\[
1 \le d_n \le d-2 \quad \text{for all} \quad 0 \le n < e-1.
\]

For every $e \geq 0$, denote
\[
X_e =
\begin{pmatrix}
X_{e,1} \\
\vdots \\
X_{e,d-2}
\end{pmatrix},
\]
in which
\begin{align*}
X_{e,i} &=  
\sum_{\substack{( d_{e}=i,\, d_{e-1},\,\dotsc,\, d_0,\,d_{-1}=0) \in \mathbb N^{e+2}
\\d_n \in \{1,\,\dotsc,\,d-2\} \text{ for } 0 \le n \le e-1}}
\prod_{n=0}^{e} M_d(pd_n - d_{n-1} + p-1)
\end{align*}
for all $i \in \{1,\,\dotsc,\,d-2\}$.

With these notations, it is straightforward to see that, for all $1
\le i \le d-2$,  
\begin{align*}
X_{e+1,i} &= \sum_{j=1}^{d-2}M_d(pi-j+p-1)X_{e,j}
\end{align*}
In other words, $X_{e+1}$ can be computed recursively: 
\[
X_{e+1} = U \cdot X_e,
\quad \text{where} \quad 
U:= \begin{pmatrix}
u_{ij}
\end{pmatrix}_{(d-2) \times (d-2)} 
\quad \text{and} \quad 
u_{ij}=M_d(pi-j+p-1).
\]
Therefore, 
\[
X_e = U^{e}\cdot  X_0 \quad \text{for all} \quad e \ge 0.
\]

With $d$ and $p$ given, both $X_0$ and $U= (u_{ij})_{(d-2) \times
  (d-2)}$ can be determined explicitly. 
Accordingly, we can compute $X_e = U^{e}\cdot  X_0$ explicitly for
all $e \ge 0$. 

Finally, for all $e \ge 2$, we can determine  $c_{d,e} =c_{e}(T(\sR))$
explicitly, as follows: 
\begin{align*}
c_{d,e} =c_{e}(T(\sR)) 
&= \sum_{\substack{(d_{e-1}=0,\, d_{e-2},\, \dotsc,\, d_0,\, d_{-1}=0) \in \mathbb N^{e+1}
\\d_n \in \{1,\,\dotsc,\,d-2\} \text{ for } 0 \le n \le e-2}}
\prod_{n=0}^{e-1} M_d(pd_n - d_{n-1} + p-1) \\
&=\sum_{i=1}^{d-2} M_d(p \cdot 0 - i + p-1) X_{e-2,i}
= \sum_{i=1}^{d-2} M_d(p - i -1) X_{e-2,i}.
\end{align*}
Consequently, $\cx(T(\sR))$ can be computed.
\end{Dis}

To illustrate above method, we provide the following example, with  
$p=2$ and $d=4$.

\begin{Exa}
\label{2,4}
Let $\sR=R[x_1,x_2,x_3,x_4]$, with $R$ having characteristic $2$.
Since $M_4(m)$ stands for the rank of $(R[x_1,x_2, x_3, x_4]/(x_1^2,
x_2^2, x_3^2, x_4^2))_m$ as an $R$-module, we see $M_4(0)=1,\,
M_4(1)=4,\, M_4(2)=6,\, M_4(3)=4,\, M_4(4)=1$, and $M_4(m)=0$ for all
$m > 4$ or $m < 0$. 

Now, by Proposition~\ref{prop:d}, we have
\[
c_{4,e}=c_{e}(T(\sR))= 
\sum_{\substack{(d_{e-1}=0,\, d_{e-2},\, \dotsc,\, d_0,\, d_{-1}=0) \in \mathbb N^{e+1}
\\d_n >  0 \text{ for } 0 \le n < e-1}}
\prod_{n=0}^{e-1} M_4(2d_n - d_{n-1} + 1),
\]
in which the nonzero summands are the ones coming from
$(d_{e-1},\, d_{e-2},\, \dotsc,\, d_0,\, d_{-1}) \in \mathbb N^{e+1}$
such that $d_{-1}=d_{e-1}=0$, $d_n >0$, and  $0 \leq 2d_n - d_{n-1} + 1
\leq 4$ for all $0 \le n < e-1$. 

From $0 \leq 2d_n - d_{n-1} + 1 \leq 4$, $0 \le n < e-1$, we see 
$2d_n\leq 3+d_{n-1}$, which subsequently shows that 
\[
1 \le d_n \le 2 \qquad \text{for all} \quad 0 \le n < e-1.
\]

For every $e \geq 0$, let us denote
\begin{align*}
A_e &=  \sum_{\substack{( d_{e}=1,\, d_{e-1},\,\dotsc,\, d_0,\,d_{-1}=0) \in \mathbb N^{e+2}
\\d_n \in \{1,\,2\} \text{ for } 0 \le n \le e-1}}
\prod_{n=0}^{e} M_4(2d_n - d_{n-1} + 1),\\
B_e&= \sum_{\substack{( d_{e}=2,\, d_{e-1},\,\dotsc,\, d_0,\,d_{-1}=0) \in \mathbb N^{e+2}
\\d_n \in \{1,\,2\} \text{ for } 0 \le n \le e-1}}
\prod_{n=0}^eM_4(2d_n - d_{n-1} + 1).
\end{align*}
With these notations, we see that $A_e$ and $B_e$ can be computed recursively:
\begin{align*}
A_{e+1} &= M_4(2)A_e + M_4(1)B_e= 6A_e+4B_e,\\
B_{e+1} &= M_4(4)A_e+M_4(3)B_e= A_e + 4B_e.
\end{align*}
Let 
$X_e =
\begin{pmatrix}
A_e \\
B_e
\end{pmatrix}$
for all $e \ge 0$.
So we have 
$$
X_{e+1} = U \cdot X_e,
\quad \text{where} \quad 
U= \begin{pmatrix}
6 & 4 \\
1 & 4
\end{pmatrix}.
$$
By computing the eigenvalues and eigenvectors of $U$, we see that
$$ U = P \cdot D \cdot P^{-1},
$$
where $$D = \begin{pmatrix}
5+\sqrt{5} & 0\\
0 & 5-\sqrt{5} \end{pmatrix}  \quad \text{and} \quad
P = \begin{pmatrix}
1+\sqrt{5} & 1-\sqrt{5}\\
1 & 1
\end{pmatrix}.
$$
Therefore, 
\[
X_e = U^{e}\cdot  X_0= P\cdot D^{e} \cdot P^{-1} \cdot X_0 
= \frac{1}{2\sqrt{5}} \begin{pmatrix}
a_{11} & a_{12}\\
a_{21} & a_{22} 
\end{pmatrix} X_0,
\]
in which 
\begin{align*}
a_{11} &=(1+\sqrt{5})(5+\sqrt{5})^{e} - (1-\sqrt{5})(5 -\sqrt{5})^{e}, \\
a_{12} &= 4(5+\sqrt{5})^{e} -4 (5 -\sqrt{5})^{e}, \\
a_{21} &=(5+\sqrt{5})^{e} - (5 -\sqrt{5})^{e}, \\
a_{22} &= (\sqrt{5}+1)(5+\sqrt{5})^{e} + (1+\sqrt{5})(5 -\sqrt{5})^{e} .
\end{align*}
But $X_0 = \begin{pmatrix}
4\\
0
\end{pmatrix}.
$
Accordingly, we get 
\[
\begin{pmatrix}
A_e \\
B_e
\end{pmatrix}
=X_e = \frac{2}{\sqrt{5}} \begin{pmatrix}
(1+\sqrt{5})(5+\sqrt{5})^{e} - (1-\sqrt{5})(5 -\sqrt{5})^{e} \\
(5+\sqrt{5})^{e} - (5 -\sqrt{5})^{e}
\end{pmatrix},\; \forall e \ge 0.
\]
In conclusion, we see that, for all $e \ge 2$, 
\begin{align*}
c_{4,e} =c_{e}(T(\sR)) 
&= \sum_{\substack{(d_{e-1}=0,\, d_{e-2},\, \dotsc,\, d_0,\, d_{-1}=0) \in \mathbb N^{e+1}
\\d_n \in \{1,\,2\} \text{ for } 0 \le n \le e-2}}
\prod_{n=0}^{e-1} M_4(2d_n - d_{n-1} + 1) \\
&= M_4(0)A_{e-2} + M_4(-1)B_{e-2} = A_{e-2} \\
&= \frac{2}{\sqrt{5}}
\left((1+\sqrt{5})(5+\sqrt{5})^{e-2} - (1-\sqrt{5})(5 -\sqrt{5})^{e-2} \right).
\end{align*}
This shows that $\cx(T(\sR)) = 5 +\sqrt{5}$, concluding the example.
\end{Exa}

\section{Anticanonical cover}

Throughout this section, we let $\ringR$ be a local normal complete
ring. For a divisorial ideal $I$, i.e., an ideal of pure height one,
we denote $I\sy{n}$ its $n$th symbolic power. Let $\omega$ be a
canonical ideal of $R$. 

The anticanonical cover of the ring $R$ is defined as
$$ \sR = \sR(\omega) =  \oplus_{n \geq 0} \omega\sy{-n}.$$

The following theorem was recently proved by Katzman, Schwede, Singh
and Zhang in~\cite{KSSZ} and makes the transition from the
$T$-construction to ring of Frobenius operators on injective hull of
the residue field of $R$. 

\begin{Thm}[Katzman, Schwede, Singh, Zhang]
\label{anticanonical}
Let $\ringR$ as above, $E$ the $R$-injective hull of $k$ and $\omega$
its canonical ideal. Then there exists a graded isomorphism: 
$$ \sF(E) \simeq T(\sR(\omega)).$$
\end{Thm}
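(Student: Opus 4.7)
My plan is to establish the isomorphism one graded piece at a time and then to check that composition of Frobenius operators on $E$ matches the twisted product $a * b = a b^{p^e}$ that defines $T(\sR(\omega))$. In degree $e$ the target identification is $\sF^e(E) \simeq T_e = \sR(\omega)_{p^e-1} = \omega^{(1-p^e)}$.

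First I would reduce the Frobenius operator side to a Hom-group on $R$ itself via Matlis duality. Since $R$ is complete, $\Hom{R}{E}{E} \simeq R$, and the tensor-hom adjunction already appearing in the setup of the paper gives a canonical isomorphism
\[
\sF^e(E) = \Hom{R}{\R{e} \otimes_R E}{E} \simeq \Hom{R}{\R{e}}{R},
\]
transporting the problem off of the injective hull and onto the ring.

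Next I would identify $\Hom{R}{\R{e}}{R}$ with the divisorial module $\omega^{(1-p^e)}$. Both sides are reflexive $R$-modules since $R$ is normal (hence $S_2$), so it suffices to produce the isomorphism on the regular locus of $\Spec R$, whose complement has codimension at least two. On that locus $\omega$ is invertible and $\R{e}$ is locally free, and a direct local computation (Grothendieck duality for the Frobenius over a regular local ring) identifies $\Hom{R}{\R{e}}{R}$ with $\omega^{1-p^e}$. Reflexivity extends this uniquely to the desired global isomorphism, and assembling over all $e$ produces a graded $R$-module isomorphism $\sF(E) \simeq T(\sR(\omega))$.

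Finally I would check that ring composition on the left corresponds to the $*$-multiplication on the right. Under the identification above, an element $a \in \omega^{(1-p^e)}$ acts as an explicit $R$-linear map $\R{e} \to R$, realized in the fraction field as multiplication by $a$. Composing an $e$-th operator against an $e'$-th operator on $E$ forces the second factor $\R{e'}$ to be pulled back through the Frobenius that defines $\R{e}$, and it is exactly this pullback that raises the defining element $b$ to the $p^e$-th power before it is multiplied against $a$; the result is $a b^{p^e} \in \omega^{(1-p^{e+e'})}$. The hardest part I foresee is here in step three: the first two steps are essentially formal applications of Matlis duality and of reflexivity on normal rings, but calibrating the isomorphism of step two sharply enough that composition can be traced to the correct exponent $p^e$ on $b$ (as opposed to $p^{e'}$, or a power landing on $a$) requires careful bookkeeping with the left versus right $\R{e}$-module structures and the direction of adjunction built into the noncommutative Frobenius operator algebra.
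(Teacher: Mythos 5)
This theorem is stated in the paper as a quoted result of Katzman--Schwede--Singh--Zhang and is given no proof here, so there is no internal argument to compare against; your outline is essentially the standard argument underlying the cited result. Your three steps are the right ones and in the right order: Matlis duality plus adjunction turns $\sF^e(E)=\Hom{R}{\R{e}\otimes_R E}{E}$ into $\Hom{R}{\R{e}}{R}$ (completeness is exactly what makes $\Hom{R}{E}{E}\simeq R$); the identification of $\Hom{R}{\R{e}}{R}$ with $\omega^{(1-p^e)}$ by computing on the regular locus and extending by reflexivity is the standard divisor-theoretic step; and your composition check gives the correct exponent, since Matlis duality reverses composition and the resulting Cartier-side product $z\mapsto \kappa^{e'}\bigl(b\,\kappa^{e}(az)\bigr)=\kappa^{e+e'}\bigl(ab^{p^e}z\bigr)$ dualizes back to $a*b=ab^{p^e}$. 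Two caveats worth recording: the reflexivity-and-codimension-two argument treats $\R{e}$ as a finitely generated (coherent) module, so as written it needs $R$ to be F-finite (automatic for complete local rings with F-finite residue field, but not in general; the alternative is to route the degree-$e$ identification through $\Hom{R}{\R{e}\otimes\omega}{\omega}$ and duality for the finite map $F^e$); and the degree-wise isomorphisms must be chosen compatibly for all $e$ simultaneously --- fixing one trivialization of $\Hom{R}{\R{e}}{R}$ on the regular locus for each $e$ independently only gives a graded module isomorphism a priori --- which is precisely the bookkeeping you correctly flag as the delicate point in step three.
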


An easy consequence of the above theorem is the following result,
stated here for the convenience of the reader. 

\begin{Prop}
$\sF(E)$
is principally generated over $\sF^0(E)$ if and only if $\ord(\omega)
\mid p-1$. 
\end{Prop}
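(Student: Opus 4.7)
The plan is to transfer the question via Theorem~\ref{anticanonical} to the $T$-construction on the anticanonical cover $\sR=\sR(\omega)$, and then read off principal generation directly from the divisor class of $\omega$. The isomorphism $\sF(E)\cong T(\sR)$ is a graded isomorphism of skew $R$-algebras, so for each $e\ge 0$ it identifies $\sF^e(E)$ with $T_e=\sR_{p^e-1}=\omega\sy{-(p^e-1)}$, a nonzero divisorial fractional ideal. In particular, the degree-zero piece is $R$ itself, and principal generation of $\sF(E)$ over $\sF^0(E)=R$ becomes principal generation of the skew $R$-algebra $T(\sR)$.

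For the ``$\Leftarrow$'' direction, assume $\ord(\omega)\mid p-1$. Then $\omega\sy{p-1}$ is a principal divisorial ideal, hence so is its inverse $\omega\sy{-(p-1)}=T_1$; I would choose $f\in T_1$ with $T_1=Rf$ and show $T(\sR)=R[f]$ by computing iterated $*$-powers. Using the rule $a*b=ab^{p^e}$ for $a\in T_e$ and $b\in T_{e'}$, one finds $f^{*e}=f^{1+p+\cdots+p^{e-1}}=f^{(p^e-1)/(p-1)}$. Since $T_e=\omega\sy{-(p^e-1)}$ equals the $(p^e-1)/(p-1)$-th power of the principal divisorial ideal $T_1$ in the divisor class group, the element $f^{*e}$ generates $T_e$ as an $R$-module. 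Hence every homogeneous component of $T(\sR)$ lies in the subring generated by $R$ and $f$, so $T(\sR)=R[f]$ and $\sF(E)$ is principally generated.

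For the ``$\Rightarrow$'' direction, suppose $T(\sR)=R[f]$ for some homogeneous $f$ of degree $e_0\ge 1$. The crucial observation is that, since $R$ is concentrated in degree zero and $f$ is concentrated in degree $e_0$, every homogeneous element of $R[f]$ has degree a nonnegative integer multiple of $e_0$; in particular $T_e=0$ whenever $e_0\nmid e$. But $T_1=\omega\sy{-(p-1)}$ is a nonzero divisorial fractional ideal, which forces $e_0=1$. Then $T_1=Rf$ is a principal divisorial ideal, and passing to inverses in the divisor class group shows $\omega\sy{p-1}$ is principal as well, i.e., $\ord(\omega)\mid p-1$.

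The only real subtlety is the degree argument that forces $e_0=1$, which relies on $T_e\ne 0$ for all $e$; the remainder is routine bookkeeping with the $T$-multiplication and with symbolic powers of $\omega$ (using that the divisorial fractional ideals form a group under $(-)\sy{\cdot}$).
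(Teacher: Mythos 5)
Your proof is correct and is essentially the argument the paper intends: the paper states this as an immediate consequence of Theorem~\ref{anticanonical}, reducing principal generation of $\sF(E)$ to the $T$-construction on $\sR(\omega)$, where $T_e=\omega\sy{-(p^e-1)}$ and the computation $f^{*e}=f^{(p^e-1)/(p-1)}$ together with the degree argument forcing the generator into degree $1$ give exactly the equivalence with $\ord(\omega)\mid p-1$. Your handling of both directions, including the observation that a principal fractional ideal is already divisorial so that $T_e=Rf^{(p^e-1)/(p-1)}$, is sound.
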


In \cite{KSSZ}, it is shown that if $R$ is $\mathbb Q$-Gorenstein with
$p \nmid \ord(\omega)$, then $\sF(E)$ is finitely generated over
$R$. It is conjectured in \cite{KSSZ} that if $p \mid \ord(\omega)$
then $\sF(E)$ is not finitely generated over $\sF^0(E)$. 

We give a confirmative proof to the conjecture of \cite{KSSZ}. 
We start with a lemma and a corollary.

\begin{Lma}\label{lma:frac_ideal}
Let $I_1,\,\dotsc,\,I_m$ and $J \cong R$ be fractional ideals of a local
domain $(R,\fm)$.  
\begin{enumerate}
\item If $I_1\dotsm I_m \cong R$ then $I_i \cong R$ for all $i =
  1,\,\dotsc,\,m$. 
\item Assume $I_1\dotsm I_m \subseteq J$. 
If $I_i \ncong R$ for some $i = 1,\,\dotsc,\,m$ then $I_1\dotsm I_m
\subseteq \fm J$
\end{enumerate}
\end{Lma}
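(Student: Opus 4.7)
The plan is to exploit the standard equivalence that a nonzero fractional ideal $I$ of a domain $R$ satisfies $I\cong R$ as $R$-modules if and only if $I$ is principal, i.e.\ $I = cR$ for some $c \in \operatorname{Frac}(R)^\times$. In these terms, part (1) amounts to producing, for each $i$, a single element of $I_i$ that generates it; and part (2) will follow from (1) by a short contrapositive argument.

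For (1), I would write $I_1\cdots I_m = aR$ and expand $a = \sum_k x_{1,k}\cdots x_{m,k}$ as a finite sum with $x_{i,k}\in I_i$. Each term $x_{1,k}\cdots x_{m,k}$ lies in $aR$, so $x_{1,k}\cdots x_{m,k} = as_k$ with $s_k \in R$, and comparing with the displayed expansion of $a$ yields $\sum_k s_k = 1$. Because $(R,\fm)$ is local, this forces some $s_{k_0}$ to be a unit $u$. I then claim that the factor $x_{i,k_0}$ coming from this single good summand generates $I_i$: given any $y\in I_i$, setting $z:=\prod_{j\ne i}x_{j,k_0}$, the product $yz$ lies in $I_1\cdots I_m = aR$, so equals $at$ with $t\in R$; dividing by the relation $zx_{i,k_0} = au$ in $\operatorname{Frac}(R)$ yields $y = (t/u)x_{i,k_0}\in x_{i,k_0}R$. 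Hence $I_i = x_{i,k_0}R\cong R$.

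For (2), I will argue the contrapositive. Writing $J = bR$, the hypothesis $I_1\cdots I_m\not\subseteq \fm J = b\fm$ produces an element of $I_1\cdots I_m$ of the form $br$ with $r\notin \fm$, hence $r\in R^\times$. Then $b = r^{-1}(br) \in I_1\cdots I_m$, forcing $J = bR\subseteq I_1\cdots I_m\subseteq J$; so $I_1\cdots I_m = J\cong R$, and (1) yields $I_i \cong R$ for every $i$. The main obstacle---really the only non-routine ingredient---is the locality step promoting $\sum_k s_k = 1$ to the existence of a unit summand, since it is precisely what isolates one ``good'' factorization $x_{1,k_0}\cdots x_{m,k_0}$ from which uniform generators for all the $I_i$ can be read off simultaneously. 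Everything else is formal manipulation in the fraction field.
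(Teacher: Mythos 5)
Your argument is correct. Part (2) is logically the same as the paper's: both reduce to (1) by observing that if the product were \emph{not} contained in $\fm J$ it would have to equal $J\cong R$ (a proper $R$-submodule of $J\cong R$ corresponds to a proper ideal of the local ring $R$, hence lands in $\fm J$). For part (1) the paper simply cites the standard facts that the factors of an invertible fractional ideal are projective and that finitely generated projectives over a local ring are free of rank one; you instead unpack exactly why that is true, via the partition-of-unity trick: writing $a=\sum_k x_{1,k}\dotsm x_{m,k}=a\sum_k s_k$, using locality to find a unit $s_{k_0}$, and reading off a single generator $x_{i,k_0}$ of each $I_i$ from that one summand. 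This is the classical proof that ``invertible implies locally principal,'' so it is the same mathematics made self-contained and elementary; the only points worth making explicit are that $a\neq 0$ (so one may cancel $a$ and divide by $z=\prod_{j\neq i}x_{j,k_0}$ in the fraction field) and that $x_{i,k_0}R\subseteq I_i$ gives the reverse inclusion, both of which are immediate. No gaps.
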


\begin{proof}
(1) If $\prod_{i=1}^mI_i \cong R$ then all $I_i$ are projective
and hence free (of rank one) over $R$.

(2) If $I_i \ncong R$ for some $i = 1,\,\dotsc,\,m$, then
$\prod_{i=1}^mI_i \subsetneq J$ and hence 
$\prod_{i=1}^mI_i \subseteq \fm J$.
\end{proof}

We state the following corollary in such a way that it can be readily quoted
in the proof of Theorem~\ref{thm:conj}.

\begin{Cor}\label{cor:canonical}
Let $(R,\fm,k)$ be a $\mathbb Q$-Gorenstein normal domain with
$\ord(\omega) = m$. Then for all $r \in \mathbb Z$ and $0< s \in
\mathbb Z$ such that $m \nmid r$ and $m \mid rs$, we have
$(\omega^{(r)})^s \subseteq \fm \omega^{(rs)}$.
\end{Cor}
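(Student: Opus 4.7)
The strategy is to apply Lemma~\ref{lma:frac_ideal}(2) directly with $I_1 = I_2 = \cdots = I_s = \omega\sy{r}$ and $J = \omega\sy{rs}$. There are three things to verify before the lemma can be invoked.

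First, I would establish the containment $(\omega\sy{r})^s \subseteq \omega\sy{rs}$. This is a general property of divisorial fractional ideals in a normal domain: their ordinary product is contained in its reflexive hull, and because $\omega\sy{rs}$ is by definition the divisorial ideal whose class in $\mathrm{Cl}(R)$ is $rs[\omega] = s \cdot r[\omega]$, the reflexive hull of $(\omega\sy{r})^s$ is precisely $\omega\sy{rs}$. When $r$ is negative, $\omega\sy{r}$ is interpreted as the $(-r)$-th symbolic power of $\omega\sy{-1}$ (equivalently $\hom_R(\omega\sy{-r},R)$), and the same reasoning applies.

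Second, since $\ord(\omega) = m$ and $m \mid rs$, the class $rs[\omega]$ is trivial in $\mathrm{Cl}(R)$, so $\omega\sy{rs} \cong R$. Third, since $m \nmid r$, the class $r[\omega]$ is nontrivial, so $\omega\sy{r} \ncong R$; this is exactly what it means for $\ord(\omega)$ to equal $m$.

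With these three facts in hand, Lemma~\ref{lma:frac_ideal}(2) applied to the fractional ideals $I_1 = \cdots = I_s = \omega\sy{r}$ and $J = \omega\sy{rs} \cong R$ immediately yields $(\omega\sy{r})^s \subseteq \fm\, \omega\sy{rs}$. There is no real obstacle here; the only point requiring care is the interpretation of $\omega\sy{r}$ for negative $r$, which is standard in the anticanonical cover setup and does not affect the class-group bookkeeping.
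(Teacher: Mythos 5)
Your proof is correct and takes essentially the same route as the paper: the paper's own proof simply invokes Lemma~\ref{lma:frac_ideal}(2) with the observations that $(\omega^{(r)})^s \subseteq \omega^{(rs)} \cong R$ and $\omega^{(r)} \ncong R$. You have merely spelled out the class-group bookkeeping behind those two facts in more detail.
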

\begin{proof}
This follows from Lemma~\ref{lma:frac_ideal}, since $(\omega^{(r)})^s
\subseteq \omega^{(rs)} \cong R$ and $\omega^{(r)} \ncong R$. 
\end{proof}

\begin{Thm}\label{thm:conj}
Let $(R,\fm,k)$ be an excellent local normal domain with prime
characteristic $p$. 
If $R$ is $\mathbb Q$-Gorenstein with $\ord(\omega) = m$ such
that $p \mid m$, then $\sF(E)$ is not finitely generated over
$(\sF(E))_0 = \sF^0(E)$.
\end{Thm}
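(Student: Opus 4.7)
My plan is to invoke Theorem~\ref{anticanonical} to translate the statement into one about the $R$-skew algebra $T(\sR(\omega))$: since $\sF(E) \cong T(\sR(\omega))$ as graded rings and $\sF^0(E) = R = T_0$, it suffices to show that $T(\sR(\omega))$ is not finitely generated as a ring over $R$. By Corollary~\ref{interpretation}, this is equivalent to showing that $c_e = c_e(T(\sR(\omega)))$ is nonzero for infinitely many $e$. I will aim at the stronger conclusion $(G_{e-1})_e \subseteq \fm T_e$ for every $e \ge 1$, which by Nakayama's lemma yields $c_e = \mu_R(T_e) \ge 1$ since $T_e \ne 0$.

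Since $(G_{e-1})_e = \sum_{e_1 + e_2 = e,\, e_1, e_2 \ge 1} T_{e_1} * T_{e_2}$, the claim reduces to showing $T_{e_1} * T_{e_2} \subseteq \fm T_e$ for each decomposition. Unwinding the $R$-skew structure, $T_{e_1} * T_{e_2}$ is the $R$-submodule of $T_e = \omega\sy{-(p^e-1)}$ generated by $\{ab^{p^{e_1}} : a \in T_{e_1},\, b \in T_{e_2}\}$, and it is contained in the ordinary fractional-ideal product $T_{e_1} \cdot (T_{e_2})^{p^{e_1}}$. The hypothesis $p \mid m$ combined with $p \nmid p^j - 1$ forces $m \nmid p^j - 1$, so every factor $T_j = \omega\sy{-(p^j-1)}$ satisfies $T_j \ncong R$; this is precisely the input required by Corollary~\ref{cor:canonical} and Lemma~\ref{lma:frac_ideal}(2).

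I then apply Corollary~\ref{cor:canonical} with $r = -(p^{e_2}-1)$ and $s = m$, whose hypotheses $m \nmid r$ and $m \mid rs$ both hold, obtaining $(T_{e_2})^m \subseteq \fm\,\omega\sy{-m(p^{e_2}-1)}$. Writing $p^{e_1} = mq + t$ with $0 \le t < m$ and batching the $p^{e_1}$ copies of $T_{e_2}$ into $q$ groups of $m$, the $\fm$-factor propagates to give $(T_{e_2})^{p^{e_1}} \subseteq \fm\,\omega\sy{-p^{e_1}(p^{e_2}-1)}$ whenever $q \ge 1$. Multiplying by $T_{e_1}$ then yields $T_{e_1} \cdot (T_{e_2})^{p^{e_1}} \subseteq \fm\,\omega\sy{-(p^e-1)} = \fm T_e$, settling every decomposition with $p^{e_1} \ge m$.

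The main obstacle is the bounded set of exceptional decompositions with $p^{e_1} < m$ (so $e_1 \in \{1, \dots, \lfloor \log_p m \rfloor\}$), where the batching step does not directly supply an $\fm$-factor. For these, I will apply Lemma~\ref{lma:frac_ideal}(2) to the full product with a carefully chosen $J \cong R$ contained in $T_e$ so that $\fm J \subseteq \fm T_e$; the natural candidate (obtained by rounding $-(p^e-1)$ up to a multiple of $m$) in fact strictly contains $T_e$ and even satisfies $T_e \subseteq \fm J$, rendering that bound useless. A more refined construction of $J$, exploiting the divisorial structure of $T_e$ inside the fraction field of $R$ together with a finer grouping that combines one copy of $T_{e_1}$ with $m-1$ copies of $T_{e_2}$ into an ``almost principal'' batch to harvest the $\fm$-factor, is what I expect to require the most technical care.
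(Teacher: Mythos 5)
The part of your argument that goes through is exactly the paper's core computation: reducing to $T(\sR(\omega))$ via Theorem~\ref{anticanonical}, bounding $T_{e_1}*T_{e_2}$ by the fractional-ideal product $T_{e_1}(T_{e_2})^{p^{e_1}}$, using $p\mid m$ and $p\nmid p^j-1$ to see $T_j\ncong R$, and extracting a factor of $\fm$ from each block of $m$ copies of $T_{e_2}$ via Corollary~\ref{cor:canonical}. The gap is the one you name yourself: the decompositions with $p^{e_1}<m$, such as $T_1*T_{e-1}$. There the product involves only $1+p^{e_1}\le m$ nontrivial factors and never passes through a principal class, so Lemma~\ref{lma:frac_ideal}(2) has nothing to bite on; your hoped-for ``refined $J$'' would have to be a principal fractional ideal squeezed between the product and $\fm T_e$, and no such $J$ exists in general. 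In fact the uniform target $(G_{e-1})_e\subseteq\fm T_e$ appears to be simply false: in Veronese examples such as $K[x_1,\dotsc,x_7]^{(6)}$ with $p=2$ and $m=6$, the term $T_1*T_{e-1}$ already contains elements of $T_e\setminus\fm T_e$ (it produces elements $ab^2$ whose divisor classes sum to $3<6$, with no wrap-around to force an $\fm$-factor), even though $c_e\ne 0$ there. So the exceptional cases are not a technicality to be patched; they sink the strategy as stated.

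The repair is to weaken the target, which is what the paper does. Non-finite generation only requires that for each fixed $e_0$ the subring $G_{e_0}(T)$ generated by $T_{\le e_0}$ misses $T_e$ for some $e>e_0$. For $e>e_0$, every word of degree $e$ in $G_{e_0}(T)$ can be grouped as (everything before the last positive-degree letter)\,$*$\,(that last letter), which gives $(G_{e_0}(T))_e\subseteq\sum_{i=1}^{e_0}T_{e-i}*T_i$; now the Frobenius power landing on the second factor is $p^{e-i}\ge p^{e-e_0}$, so choosing $e$ with $p^{e-e_0}>m$ places every single term in the regime your batching argument already covers, and you conclude $(G_{e_0}(T))_e\subseteq\fm T_e\subsetneq T_e$. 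In short: you get to choose which factor carries the small degree, and making the \emph{second} factor the small one eliminates your exceptional decompositions entirely.
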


\begin{proof}
We may assume $R$ is complete. 
Since $\sF(E) \simeq T(\sR(\omega))$ by \cite{KSSZ}, it suffices to
prove that $T(\sR(\omega))$ is not finitely generated over
$(T(\sR(\omega)))_0 = R$, 
in which $\sR(\omega) =  \oplus_{n \geq 0} \omega\sy{-n}$ with the
(twisted) multiplication denoted by $*$. 
For shorter notation, denote
\[
T := T(\sR(\omega)) \quad \text{and} \quad 
T_e:= (T(\sR(\omega)))_e = \omega^{(1-p^e)}.
\]

Now it suffices to prove that $T_{\le e_0}$ does not generate $T$ for
any $e_0 \in \mathbb N$. 
To this end, fix an arbitrary $e_0\in \mathbb N$; and 
let $G: = G_{e_0}(T)$ denote the subring of $T$ generated by $T_{\le e_0}$.
For every $e > e_0$ satisfying $p^{e-e_0} > m = \ord(\omega)$, we have
the following:
\begin{align*}
G_e &\subseteq \sum_{i=0}^{e_0} (T_{e-i} * T_i) 
= \sum_{i=0}^{e_0} \left(T_{e-i}  T_i^{[p^{e-i}]}\right) \\
&= \sum_{i=0}^{e_0} \left(\omega^{(1-p^{e-i})}
  (\omega^{(1-p^{i})})^{[p^{e-i}]}\right)\\ 
&\subseteq \sum_{i=0}^{e_0} \left(\omega^{(1-p^{e-i})}
  (\omega^{(1-p^{i})})^{p^{e-i}}\right)\\ 
&= \sum_{i=0}^{e_0} \left(\omega^{(1-p^{e-i})}
  (\omega^{(1-p^{i})})^{p^{e-i}-m}(\omega^{(1-p^{i})})^m\right)\\ 
&\subseteq \sum_{i=0}^{e_0} \left(\omega^{(1-p^{e-i})}
  (\omega^{(1-p^{i})})^{p^{e-i}-m} \fm \omega^{((1-p^{i})m)}\right)
\quad \text{(by Corollary~\ref{cor:canonical})}\\ 
&= \fm \sum_{i=0}^{e_0} \left(\omega^{(1-p^{e-i})}
  (\omega^{(1-p^{i})})^{p^{e-i}-m} \omega^{((1-p^{i})m)}\right) 
\subseteq \fm \sum_{i=0}^{e_0} \omega^{(1-p^{e})}
= \fm T_e \subsetneq T_e.
\end{align*}
Therefore $T_{\le e_0}$ does not generate $T$, which completes
the proof.
\end{proof}

As a corollary, we have the following result which deals with the total Cartier algebra of $R$. For a definition, we refer the reader to Definition 6.1 in~\cite{KSSZ}.

\begin{Thm}
Let $\ringR$ be an F-finite complete local normal
$\mathbb{Q}$-Gorenstein domain of prime 
characteristic $p$ with $p \mid \ord(\omega)$. 
Then the total Cartier algebra is not finitely generated over $R$.
\end{Thm}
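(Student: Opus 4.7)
The plan is to reduce this to Theorem~\ref{thm:conj} via the standard Matlis duality between Frobenius operators on $E$ and Cartier operators on $R$, which is available precisely because $R$ is assumed F-finite and complete. Under these hypotheses, for each $e \ge 0$ there is a natural $R$-module isomorphism between $\sF^e(E) = \hom_R(E,\M{e})$ and $\mathcal C^e(R) := \hom_R(F^e_*R, R)$, and these isomorphisms assemble into a graded (anti-)isomorphism between $\sF(E)$ and the total Cartier algebra $\mathcal C(R) = \bigoplus_{e \ge 0}\mathcal C^e(R)$. The precise form of this correspondence is recorded in~\cite{KSSZ} (see their Definition~6.1 and the discussion around it).

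First I would invoke this duality to observe that, as graded $R$-skew algebras, $\sF(E)$ and $\mathcal C(R)$ have the same degree-wise generation behavior over their respective degree-zero components, both of which coincide with (a quotient of) $R$ in the situation at hand. In particular, $\mathcal C(R)$ is finitely generated as an $R$-algebra if and only if $\sF(E)$ is finitely generated over $\sF^0(E)$.

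Next I would apply Theorem~\ref{thm:conj} directly: the hypotheses match (excellent local normal domain of characteristic $p$, $\mathbb Q$-Gorenstein with $p \mid \ord(\omega)$, and completeness is allowed by passing to $\widehat R$ since F-finiteness is preserved under completion and both Cartier generation and the order of the canonical class transfer to the completion). Theorem~\ref{thm:conj} yields that $\sF(E)$ is not finitely generated over $\sF^0(E)$, and hence, via the duality, $\mathcal C(R)$ is not finitely generated over $R$.

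The main obstacle is really bookkeeping rather than substance: one must make sure the duality between $\sF(E)$ and $\mathcal C(R)$ is set up compatibly with the grading and the $R$-skew structure (so that ``finitely generated'' on one side corresponds to ``finitely generated'' on the other), and one must check that completing $R$ does not change the relevant invariants. Once these identifications are in place, the result is an immediate corollary of Theorem~\ref{thm:conj}.
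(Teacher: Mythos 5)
Your proposal is correct and matches the paper's argument: the paper likewise deduces the result from Theorem~\ref{thm:conj} via the fact that the total Cartier algebra is isomorphic to the opposite of the Frobenius algebra $\sF(E)$ as graded rings (citing 2.2.1 of~\cite{ABZ}), which is exactly the duality you describe. The only superfluous step is your discussion of passing to the completion, since $R$ is already assumed complete.
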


\begin{proof}
This is because the total Cartier algebra is isomorphic to the opposite
of the Frobenius algebra as graded rings; see 2.2.1 in~\cite{ABZ}, for example. 
\end{proof}

We also state the following results on the complexity of a complete
local normal ring.

\begin{Thm}\label{thm:fg-finite}
Let $\ringR$ be a local normal complete ring and further
assume that $\sR = \sR(\omega)$ is a finitely generated $R$-algebra. 
Then $\cx_F(R) < \infty$.
\end{Thm}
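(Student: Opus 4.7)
The plan is to pass through Theorem~\ref{anticanonical} to rephrase the claim in terms of $T(\sR(\omega))$, and then bound the complexity sequence $\{c_e(T(\sR))\}$ by a crude polynomial count of weighted monomials. Since $\cx_F(R) = \log_p \cx(\sF(E))$ and $\sF(E) \simeq T(\sR)$ as graded rings, the task reduces to producing some $n > 0$ with $c_e(T(\sR)) = O(n^e)$.

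First, I would observe that $T(\sR)$ is an $R$-skew algebra with $(T(\sR))_0 = \sR_0 = R$ and that it is degree-wise finitely generated: each $T_e = \sR_{p^e - 1}$ is a finitely generated $R$-module because $\sR$ is, by hypothesis, a finitely generated $R$-algebra. Corollary~\ref{interpretation} then applies and yields
\[
c_e(T(\sR)) \;=\; \mu_R\!\left(T_e/(G_{e-1})_e\right) \;\leq\; \mu_R(T_e) \;=\; \mu_R(\sR_{p^e - 1}).
\]
So the whole problem reduces to bounding $\mu_R(\sR_k)$ polynomially in $k$.

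Second, to obtain such a bound I would choose homogeneous $R$-algebra generators $y_1,\dotsc,y_s$ of $\sR$ of positive degrees $m_1,\dotsc,m_s$ (any degree-zero generator lies in $\sR_0 = R$ and may be discarded). Then $\sR_k$ is spanned as an $R$-module by the monomials $y_1^{a_1}\dotsm y_s^{a_s}$ with $\sum_{i=1}^{s} m_i a_i = k$. Since each $m_i \geq 1$ forces $0 \leq a_i \leq k$, the number of such tuples is at most $(k+1)^s$, hence $\mu_R(\sR_k) \leq (k+1)^s$.

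Combining the two estimates yields $c_e(T(\sR)) \leq p^{se}$ for every $e \geq 1$, so $\cx(T(\sR)) \leq p^s$ and consequently $\cx_F(R) \leq s < \infty$. I do not anticipate a real obstacle here: the substantive input is the Katzman--Schwede--Singh--Zhang identification in Theorem~\ref{anticanonical}, and once that is in hand the remainder is just Nakayama plus a crude weighted-monomial count. The only point that requires care is checking that the hypotheses of Corollary~\ref{interpretation} (the skew-algebra structure with $A_0 = R$ and degree-wise finite generation) are all verified for $T(\sR)$.
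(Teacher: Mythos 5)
Your proof is correct and follows essentially the same route as the paper: both reduce via Theorem~\ref{anticanonical} and Corollary~\ref{interpretation} to bounding $c_e(T(\sR))$ by the minimal number of generators of the graded piece in degree $p^e-1$, and then observe that this count grows only polynomially in $p^e$, hence is $O(n^e)$ for a suitable $n$. The only (harmless) difference is that you bound $\mu_R(\sR_{p^e-1})$ directly by a weighted-monomial count in the chosen generators, whereas the paper first surjects a polynomial ring $\sS=R[x_1,\dotsc,x_d]$ onto $\sR$ and invokes Theorem~\ref{thm:nearly-onto} to compare $c_e(T(\sR))$ with $c_e(T(\sS))$ before counting monomials there.
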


\begin{proof}
Since $\sR$ is finitely generated over $R$, we can find a surjective
graded homomorphism: 
$$ \sS=R[x_1, \ldots, x_d] \to \sR,$$ where we let $\deg(x_i)=d_i$,
for all $1\leq i \leq d$. 

This induces a graded surjective homomorphism of skew algebras:
$T(\sS) \to T(\sR)$, and by Theorem~\ref{thm:nearly-onto} we get
$c_e(\sS) \geq c_e(\sR)= c_e(R)$. 
So we only need to show $\cx(T(\sS)) < \infty$.

But by construction $T(\sS) = \oplus _e \sS_{p^e-1}$.
According to our definitions 
$$c_{e}(T(\sS))  \leq \mu_R (\sS_{p^e-1}) \leq \binom{p^e-2+d}{d-1} 
= \frac{1}{(d-1)!} \cdot [p^e(p^e+1)\cdots (p^e+(d-2))].$$
This gives that
$$\cx(T(\sR)) \leq \cx(T(\sS)) \leq p^{d-1}.$$
So $\cx_F(R) \leq d-1$. 
\end{proof}

\begin{Rem}
The proof above indicates that, for any $\mathbb N$-graded commutative
ring $\sR$ of prime characteristic $p$, if $\sR$ is finitely generated
algebra over $\sR_0$ then $\cx(T(\sR)) < \infty$.
\end{Rem}

For rings of dimension at most two, we can be more precise due to a
result of Sally. 

\begin{Thm}[Sally,  Theorem~1.2, page~51 and Theorem~2.1, page 52 in \cite{Sa}]
\label{sally}
Let $\ringR$ be a local ring. Then
\begin{enumerate}
\item
The number of generators of all ideals is bounded above if and only if
$\dim(R) \le 1$;
\item
There is a bound on the number of generators of all ideals $I$ such
that $\fm$ is not an associated prime of $I$ if and only if $\dim(R) \le 2$.
\end{enumerate}
\end{Thm}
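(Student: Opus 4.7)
The plan is to prove each of (1) and (2) by establishing the two implications separately. For the converses (failure when the dimension is too large) one exhibits explicit families of ideals with unbounded number of generators. If $\dim(R) \geq 2$ one can, by prime avoidance, choose $x,\,y \in \fm$ so that $(x,y)$ has height two, and consider the ideals $I_n = (x,y)^n$. The minimal number of generators $\mu(I_n)$ tends to infinity with $n$: when $x,\,y$ form a regular sequence this is immediate because $\gr_{(x,y)}(R) \cong (R/(x,y))[X,Y]$, giving $\mu(I_n) = n+1$, and the general case reduces to this by passing to a suitable quotient of $R$. For (2) one needs in addition $\fm \notin \Ass(R/I_n)$; this is arranged when $\dim(R) \geq 3$ by taking $x,\,y$ to be part of a regular sequence that extends with a third element $z \in \fm$, so that $z$ is a nonzerodivisor on $R/(x,y)^n$ and hence $\fm \notin \Ass(R/I_n)$.

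For the forward directions (bounded number of generators in small dimension) the main tool is multiplicity theory. If $\dim(R) = 0$ then $R$ is Artinian and $\mu(I) \leq \length(R)$ for every ideal $I$. If $\dim(R) = 1$ a uniform bound comes from the Hilbert--Samuel multiplicity: for $\fm$-primary $I$ one controls $\mu(I)$ by $e(\fm)$, while for non-$\fm$-primary $I$ one exploits the structure of the minimal primes of $R$. For the forward direction of (2) with $\dim(R) = 2$, the hypothesis $\fm \notin \Ass(R/I)$ provides a nonzerodivisor $x \in \fm$ on $R/I$; the image of $I$ in $R/(x)$ has minimal number of generators differing from $\mu(I)$ by a controllable correction term, and $R/(x)$ has dimension at most one, reducing to (1).

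The principal obstacle is the forward implication in (2). The reduction from $R$ to $R/(x)$ must produce a bound that is uniform over \emph{all} ideals $I$ with $\fm \notin \Ass(R/I)$, which requires controlling, independently of $I$, both the choice of the nonzerodivisor $x$ and the difference between $\mu(I)$ and the minimal number of generators of its image in $R/(x)$. Making this uniform is the technical heart of Sally's argument, and is where the dimension-two hypothesis is genuinely essential.
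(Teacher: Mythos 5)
First, a point of comparison: the paper does not prove this statement at all --- it is quoted directly from Sally's monograph \cite{Sa} (Theorem~1.2 and Theorem~2.1 there), so there is no in-paper argument to measure your proposal against; it has to stand on its own. As an outline of Sally's strategy it is broadly sensible, but it contains one step that fails and one part that is named rather than proved. The failing step is in the converse of (2): you build the unbounded family $I_n=(x,y)^n$ by extending $x,y$ to a regular sequence $x,y,z$ and using $z$ to exclude $\fm$ from $\Ass(R/I_n)$. A local ring of dimension $\ge 3$ need not admit a regular sequence of length three (its depth can be as small as $0$), so the element $z$ you rely on need not exist. The standard repair is to localize: pick a prime $\fp\neq\fm$ with $\het\fp\ge 2$ (take $\fp=\fp_2$ in a chain $\fp_0\subsetneq\fp_1\subsetneq\fp_2\subsetneq\fp_3=\fm$) and set $I_n=\fp^{(n)}=\fp^nR_\fp\cap R$. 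Then $R/I_n$ embeds in $R_\fp/\fp^nR_\fp$, so every associated prime of $R/I_n$ is contained in $\fp$ and $\fm\notin\Ass(R/I_n)$, while $\mu_R(I_n)\ge\mu_{R_\fp}(\fp^nR_\fp)$, the Hilbert function of the local ring $R_\fp$ of dimension $\ge 2$, which is unbounded. (For the converse of (1), taking $I_n=\fm^n$ and noting $\mu(\fm^n)=\dim_k\fm^n/\fm^{n+1}$ grows like $n^{\dim R-1}$ is cleaner and avoids your ``pass to a suitable quotient,'' which as written suffers from the same depth problem.)

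The more serious issue is that the forward implication of (2) --- the actual content of Sally's Theorem~2.1 --- is not established. You correctly identify that one must choose a nonzerodivisor $x$ on $R/I$ and bound, uniformly in $I$, the discrepancy between $\mu(I)$ and the number of generators of the image of $I$ in $R/(x)$, but you supply no mechanism for either: no uniform choice of $x$ (it must vary with $I$, yet the resulting bound must not), no estimate for the correction term, and no treatment of the non-Cohen--Macaulay case (which already complicates the dimension-one forward direction you gesture at via ``the structure of the minimal primes''). Flagging the technical heart of the argument is not the same as carrying it out, so what you have is an honest plan, not a proof. Since the theorem is being imported from the literature in any case, this does not affect the paper, but your write-up should not be presented as a self-contained argument.
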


\begin{Thm}
Let $\ringR$ be a local, complete normal ring of dimension at most
$2$. Then $\cx_F(R) \leq 0$. 
\end{Thm}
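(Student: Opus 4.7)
The plan is to show that the Frobenius complexity sequence $\{c_e(R)\}_e$ is uniformly bounded, which is equivalent to $\cx(\sF(E)) \le 1$ and hence to $\cx_F(R) \le 0$. The cases $\dim(R) \le 1$ are disposed of at once: a complete normal local ring of dimension zero is a field, and a complete normal local domain of dimension one is a DVR, in particular Gorenstein. In either case the earlier remarks give $\cx_F(R) = -\infty$. So we may assume $\dim(R) = 2$; then $R$ is $S_2$ by normality, and $S_2$ in dimension two forces $R$ to be Cohen--Macaulay.

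By Theorem~\ref{anticanonical}, $\sF(E) \cong T(\sR(\omega))$ as $\mathbb{N}$-graded rings, hence $c_e(R) = c_e(T(\sR(\omega)))$. Since $T_e = \omega\sy{1-p^e}$, Corollary~\ref{interpretation} yields
$$c_e(R) = \mu_R\bigl(T_e/(G_{e-1})_e\bigr) \le \mu_R(T_e) = \mu_R(\omega\sy{1-p^e}).$$
Being a reflexive rank-one $R$-module, $\omega\sy{1-p^e}$ is isomorphic to a divisorial ideal $J_e \subseteq R$ of pure height one, so $\mu_R(\omega\sy{1-p^e}) = \mu_R(J_e)$. Thus it suffices to bound $\mu_R(J_e)$ uniformly in $e$.

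The key step is to invoke Sally's Theorem~\ref{sally}(2), which in dimension two gives a uniform bound on the number of generators of all ideals $I$ with $\fm \notin \Ass(R/I)$. To verify this hypothesis for $I = J_e$, note that $J_e$ is reflexive of rank one in the two-dimensional Cohen--Macaulay ring $R$, hence maximal Cohen--Macaulay, i.e.\ $\depth J_e = 2$. The depth lemma applied to $0 \to J_e \to R \to R/J_e \to 0$ then gives $\depth(R/J_e) \ge \min(\depth J_e - 1,\ \depth R) = 1$, so $\fm \notin \Ass(R/J_e)$. Sally's theorem therefore produces a constant $N = N(R)$, independent of $e$, such that $\mu_R(J_e) \le N$ for every $e$, whence $c_e(R) \le N$ for every $e$.

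The main obstacle I anticipate is precisely the translation from the intrinsic $R$-module $T_e/(G_{e-1})_e$ living inside $\sF(E)$ to an ideal-theoretic object of $R$ to which Sally's theorem applies; once the identification $T_e \cong J_e \subseteq R$ is in place and the depth lemma confirms $\fm \notin \Ass(R/J_e)$, the uniform bound on $c_e(R)$, and hence $\cx_F(R) \le 0$, follows directly.
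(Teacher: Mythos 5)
Your proposal is correct and follows essentially the same route as the paper: reduce via Theorem~\ref{anticanonical} and Corollary~\ref{interpretation} to bounding the number of generators of the divisorial ideals $\omega^{(1-p^e)}$, check that $\fm$ is not an associated prime (the paper says this directly from purity of height one, you via a depth computation), and invoke Sally's Theorem~\ref{sally}(2). No issues.
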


\begin{proof} 
When $\dim(R) \le 1$, we know that $R$ is regular and hence $\cx_F(R)
= - \infty$ (see Remark~\ref{locoh}~(iii)). 
Nevertheless, the following argument works for $\Dim(R) \leq 2$.  

According to Theorem~\ref{anticanonical}, we have that $ \sF(E) \simeq
T(\sR(\omega))$, where $\sR(\omega)$ is the anticanonical cover of
$R$. The graded parts of the anticanonical cover are ideals of pure
dimension $1$. By Theorem~\ref{sally}, the number of generators of
these ideals are bounded by above. But Corollary~\ref{interpretation}
tells us now that $k_e(R)$ and hence $c_e(R)$ are bounded, and hence
$\cx_F(R) \leq 0$. 
\end{proof}

We close this section with the following relevant question.

\begin{Q}
Let $\ringR$ be a local complete normal ring. Is $\cx_F(R) < \infty$?
\end{Q}

\section{Examples of determinantal rings}


Recall that, for $\mathbb N$-graded commutative rings 
$A = \oplus_{i \in \mathbb N}A_i$ and $B = \oplus_{i \in \mathbb N}B_i$
such that $A_0 = R = B_0$, their Segre product is 
\[
A \segre B = \oplus_{i \in \mathbb N}(A_i \otimes_R B_i),
\]
which is a ring under the natural operations.

In this section, we study the Segre product of $K[x_1, \dotsc, x_{d}]$
and $K[y_1,\dotsc, y_{d-1}]$. This ring is naturally isomorphic to the
determinantal ring $K[X]/I$ where 
$X$ is a $d \times (d-1)$ matrix of indeterminates and $I$ is the ideal
of the $2 \times 2$ minors of $X$.

\begin{Thm}[{\cite[(4.2.3), page 430]{Wa}}] \label{thm:Watanabe}
Let $K$ be a field and $d \ge 3$. 
The anticanonical cover of the Segre product of $K[x_1, \dotsc,
x_{d}]$ and $K[y_1, \dotsc, y_{d-1}]$ is isomorphic to 
\[
\bigoplus_{i \in \mathbb N} 
\left(\bigoplus_{\alpha \in \mathbb N^{d},\, 
\beta \in \mathbb N^{d-1},\, |\alpha| - |\beta|=i} 
Kx^{\alpha}y^{\beta}\right),
\]
in which the grading is governed by $i$. 
Here, for $\alpha = (a_1, \dotsc, a_d)$ and $\beta = (b_1, \dotsc,
b_{d-1})$ we denote $x^{\alpha} = x_1^{a_1} \dotsm x_d^{a_d}$ and
$y^{\beta} = y_1^{b_1} \dotsm y_{d-1}^{b_{d-1}}$.
\end{Thm}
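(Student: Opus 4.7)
The plan is to realize the Segre product as the ring of invariants of a $K^\times$-action on a polynomial ring, identify symbolic powers of the canonical divisor with weight eigenspaces, and invoke Watanabe's formula to determine which eigenspace is the canonical one.

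First, let $P := K[x_1,\dotsc,x_d,y_1,\dotsc,y_{d-1}]$ and give it the $\mathbb Z$-grading in which $\deg(x_i)=1$ and $\deg(y_j)=-1$; equivalently, this is the weight decomposition for the $K^\times$-action $t\cdot x_i=tx_i$, $t\cdot y_j=t^{-1}y_j$. The weight-$n$ eigenspace is
\[
P_n=\bigoplus_{|\alpha|-|\beta|=n}Kx^\alpha y^\beta,\qquad n\in\mathbb Z,
\]
and the invariants $S:=P_0$ coincide with the Segre product of $K[x_1,\dotsc,x_d]$ and $K[y_1,\dotsc,y_{d-1}]$.

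The next step is to show that, for $d\ge 3$, each $P_i$ with $i\ge 0$ is a divisorial (reflexive, rank-one) fractional $S$-ideal inside $\op{Frac}(P)$, and that polynomial multiplication in $P$ restricts to $P_mP_n=P_{m+n}$ for all $m,n\ge 0$. The multiplicativity is immediate from the observation that $P_n$ is generated as an $S$-module by the monomials $x^\alpha$ with $|\alpha|=n$ (for any monomial $x^\alpha y^\beta$ with $|\alpha|-|\beta|=n$, picking $\gamma\le\alpha$ with $|\gamma|=n$ gives $x^\alpha y^\beta=x^\gamma\cdot(x^{\alpha-\gamma}y^\beta)$ with the second factor in $S$). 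For reflexivity, I would observe that $y_1^nP_n$ equals $I^n$, where $I=(x_1y_1,\dotsc,x_dy_1)\subset S$. Then $I$ is the kernel of the surjection $S\twoheadrightarrow K[x_1,\dotsc,x_d]\segre K[y_2,\dotsc,y_{d-1}]$; the target has dimension $2d-3$, so $I$ has height one precisely when $d\ge 3$, and it is prime since the target is a domain. The ordinary and symbolic powers of $I$ coincide: any $s\in S$ with $y_1^n\mid s$ equals $y_1^n t$ for some $t\in P_n$, which by the generation statement above can be written as $\sum_{|\alpha|=n}x^\alpha t_\alpha$ with $t_\alpha\in S$, whence $s=\sum z^\alpha t_\alpha\in I^n$, where $z_i:=x_iy_1$. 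Thus $y_1^nP_n=I^n=I^{(n)}$ is divisorial, and so is $P_n$.

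To identify the canonical class I would invoke Watanabe's theorem \cite[(4.2.3)]{Wa}, which for a $K^\times$-action on a polynomial ring with weights $a_1,\dotsc,a_N$ identifies the canonical class of the invariant ring as the weight $-(a_1+\dotsb+a_N)$ eigenspace. Here $\sum a_i=d\cdot(+1)+(d-1)\cdot(-1)=1$, so $\omega_S\cong P_{-1}$ and hence $\omega_S^{(-i)}\cong(P_{-1})^{(-i)}=P_i$ for every $i\in\mathbb N$. Assembling these identifications yields
\[
\sR(\omega_S)=\bigoplus_{i\ge 0}\omega_S^{(-i)}=\bigoplus_{i\in\mathbb N}P_i=\bigoplus_{i\in\mathbb N}\Bigl(\bigoplus_{|\alpha|-|\beta|=i}Kx^\alpha y^\beta\Bigr),
\]
with the ring structure inherited from $P$, exactly the claimed description.

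The main obstacle, and the sole place where $d\ge 3$ is essential, is the reflexivity step: when $d=2$, the ``smaller Segre product'' collapses to a point instead of a divisor, the ideal $I$ becomes the maximal ideal of $S$, the pieces $P_n$ fail to be divisorial, and the naive polynomial-product ring $\bigoplus_iP_i$ diverges from the actual anticanonical cover. So the bulk of the technical work is verifying the codimension one claim for $V(I)\subset\Spec S$ and the identity $I^n=I^{(n)}$, after which Watanabe's formula is almost a direct invocation.
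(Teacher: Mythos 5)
The paper offers no argument for this statement at all --- it is quoted verbatim from Watanabe \cite[(4.2.3)]{Wa} --- so your proposal is being measured against the source it reconstructs rather than against anything in the text. Your reconstruction is essentially sound and follows the standard line: realize $S=P_0$ as the weight-zero part of $P=K[x_1,\dotsc,x_d,y_1,\dotsc,y_{d-1}]$ under the $K^\times$-action with weights $+1$ on the $x_i$ and $-1$ on the $y_j$, show each $P_i$ is a divisorial fractional ideal with $P_mP_n=P_{m+n}$, and identify $\omega_S$ with $P_{-1}$. The generation of $P_n$ over $S$ by the $x^\gamma$ with $|\gamma|=n$, the identity $y_1^nP_n=I^n$ with $I=(x_1y_1,\dotsc,x_dy_1)$, and the height computation isolating $d\ge 3$ are all correct.

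Two points deserve more care. First, your claim $I^{(n)}=I^n$ silently identifies $I^{(n)}$ with $\{s\in S: y_1^n \mid s \text{ in } P\}$; this needs the observation that $S$ is normal (it is a direct summand of $P$), that $S_I$ is therefore a DVR, and that its normalized valuation is the restriction of the $y_1$-adic valuation of $P$ (normalized because $v_{y_1}(x_1y_1)=1$ lies in the value group of $\op{Frac}(S)$). Only then does $I^nS_I\cap S$ coincide with the $y_1$-divisibility locus, which your computation shows equals $I^n$. Second, invoking ``\cite[(4.2.3)]{Wa}'' for the weight of the canonical class is circular here, since (4.2.3) is precisely the statement being proved; you should instead cite the Goto--Watanabe formula $\omega_{A\segre B}=\bigoplus_n(\omega_A)_n\otimes_K(\omega_B)_n$ (valid since both factors have dimension $\ge 2$ when $d\ge 3$), which gives $\omega_S=\bigoplus_m A_m\otimes B_{m+1}=P_{-1}$ directly, or Watanabe's separate results on class groups of $\mathbb Z$-graded rings. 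With those repairs the argument is complete, including the compatibility of the ring structures, since the fractional-ideal avatars $y_1^nP_n$ multiply exactly as the $P_n$ do.
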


\begin{Rem}\label{rmk:segre-complete}
Let $S_d$ denote the completion of 
$K[x_1,\dotsc, x_{d}] \segre K[y_1, \dotsc, y_{d-1}]$ 
with respect to the ideal generated by all homogeneous elements of
positive degree, in which $K$ is a field and $d \ge 3$.  
It is easy to see that
\begin{align*}
S_d &\cong \prod_{\alpha \in \mathbb N^{d},\, 
\beta \in \mathbb N^{d-1},\, |\alpha| = |\beta|} 
Kx^{\alpha}y^{\beta} \\
&=\left\{
\sum_{|\alpha| = |\beta|}a_{\alpha,\, \beta}x^{\alpha}y^{\beta} 
\; \Big | \; 
a_{\alpha,\, \beta} \in K, \, \alpha \in \mathbb N^{d},\, 
\beta \in \mathbb N^{d-1}\right\}
\subset K[[x_1, \dotsc,x_{d}, y_1, \dotsc, y_{d-1}]].
\end{align*}
Let $\mathscr R_d$ be the anticanonical cover of $S_d$.
It follows from Theorem~\ref{thm:Watanabe} that 
\[
\mathscr R_d \cong \bigoplus_{i \in \mathbb N} 
\left(\prod_{\alpha \in \mathbb N^{d},\, 
\beta \in \mathbb N^{d-1},\, |\alpha| - |\beta|=i} 
Kx^{\alpha}y^{\beta}\right),
\]
in which the grading is governed by $i$. 
\end{Rem}

\begin{Lma}\label{lma:nearly-onto}
Let $A$ and $B$ be degree-wise finitely generated $\mathbb N$-graded
commutative rings and   
$h \colon A \to B$ be a graded ring homomorphism.
\begin{enumerate}
\item The homomorphism $h$ is nearly onto if and only if $B_i$
is generated by $h(A_i)$ as a $B_0$-module for all $i \in \mathbb N$
(that is, $B$ is generated by $h(A)$ as a $B_0$-module).
\item If $A$ and $B$ have prime characteristic $p$ and $h$
is nearly onto, then the induced graded homomorphism  
$T(h) \colon T(A) \to T(B)$ is nearly onto.
\end{enumerate}
\end{Lma}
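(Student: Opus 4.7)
My plan is to handle (1) by unpacking the definition of $B_0[h(A)]$, where the commutativity of $A$ and $B$ does the essential work, and then to derive (2) by invoking part (1) at the degrees of the form $p^e - 1$ and observing that the twisted multiplication in $T(B)$ with one factor in degree zero reduces to ordinary multiplication in $B$.

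For part (1), the backward direction is immediate: if $B_i = B_0 \cdot h(A_i)$ for every $i$, then every homogeneous element of $B$ already lies in the subring $B_0[h(A)]$, so $B = B_0[h(A)]$. For the forward direction, I plan to take an arbitrary homogeneous $b \in B_i$ and, using $B = B_0[h(A)]$, write it as a sum of $B_0$-scalar monomials $b_0 \cdot h(a_1) \cdots h(a_k)$; by collecting homogeneous components, one may assume $\sum_j \deg(a_j) = i$. The key move is then to use commutativity of $B$ together with the fact that $h$ is a graded ring homomorphism in order to recognise $h(a_1) \cdots h(a_k) = h(a_1 \cdots a_k) \in h(A_i)$, which places $b$ in $B_0 \cdot h(A_i)$.

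For part (2), I will first apply (1) to the commutative pair $A,B$ at each index $p^e - 1$ to obtain $B_{p^e - 1} = B_0 \cdot h(A_{p^e - 1})$. Next I will note that, in $T(B)$, the product $b_0 * b = b_0 \cdot b^{p^0} = b_0 b$ for $b_0 \in T(B)_0 = B_0$ and $b \in T(B)_e = B_{p^e - 1}$ coincides with the ordinary product in $B$, so the previous equality translates degree-by-degree into $T(B)_e = T(B)_0 \cdot T(h)(T(A)_e)$ inside $T(B)$. Summing over $e \ge 0$ then places every homogeneous component of $T(B)$ inside the subring $T(B)_0[T(h)(T(A))]$, and consequently $T(h)$ is nearly onto.

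The argument is largely a routine definition-chase; the only delicate point is the degree-zero observation inside $T(B)$, which is what allows the commutative characterisation of (1) to be used even though $T(A)$ and $T(B)$ are themselves non-commutative rings.
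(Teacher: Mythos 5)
Your proposal is correct and follows essentially the same route as the paper: the easy implication (module generation implies nearly onto) needs no commutativity, the forward direction of (1) is the degree-collection argument using commutativity of $B$ and the fact that $h$ is a graded ring homomorphism, and (2) is obtained by specializing (1) to degrees $p^e-1$ and observing that the twisted product with a degree-zero factor is the ordinary product in $B$. The paper compresses these steps as ``routine''; you have simply supplied the details.
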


\begin{proof}
(1) It is clear that if $B$ is generated by $h(A)$ as a $B_0$-bimodule
then $h$ is nearly onto, which does not rely on $A$ or $B$ being
commutative. 

Now assume $h$ is nearly onto. 
Since $B$ is commutative, it is routine to see that $B_i$
is generated by $h(A_i)$ as a $B_0$-module for every $i \in \mathbb N$. 

(2) If $h$ is nearly onto, then $B_i$ is generated by $h(A_i)$ as a
$B_0$-module for every $i \in \mathbb N$. In particular, $B_{p^e-1}$
is generated by $h(A_{p^e-1})$ as a $B_0$-module for every $e \in
\mathbb N$. Viewing this inside $T(B)$, we see that $T(B)_e$ is
generated by $T(h)(T(A)_e)$ as a left $T(B)_0$-module for every $e
\in \mathbb N$. Therefore $T(h)$ is nearly onto (by part (1) above). 
\end{proof}

\begin{Cor}\label{cor:nearly-onto}
Let $A$ and $B$ be $\mathbb N$-graded commutative rings of prime
characteristic $p$. 
If there exists a graded ring homomorphism $h \colon A \to B$ that
is nearly onto, then $c_e(T(A)) \ge c_e(T(B))$ for all $e \ge 0$.
\end{Cor}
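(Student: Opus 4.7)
The plan is to deduce the corollary immediately by combining Lemma~\ref{lma:nearly-onto}(2) with Theorem~\ref{thm:nearly-onto}. First I would invoke Lemma~\ref{lma:nearly-onto}(2), whose input is the nearly onto graded homomorphism $h\colon A\to B$ of commutative $\mathbb N$-graded rings of characteristic $p$ and whose output is a nearly onto graded ring homomorphism $T(h)\colon T(A)\to T(B)$ between the (noncommutative) skew $R$-algebras obtained via the $T$-construction. Commutativity of $B$ is what drives that lemma: it is used to pass from the ring-theoretic statement $B_0[h(A)]=B$ to the componentwise statement $B_{p^e-1}=B_0\cdot h(A_{p^e-1})$ for every $e$, which then lifts directly into the $T$-construction and says exactly that $T(B)_e$ is generated by $T(h)(T(A)_e)$ as a left $T(B)_0$-module.

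Second, I would apply Theorem~\ref{thm:nearly-onto} to the nearly onto map $T(h)\colon T(A)\to T(B)$, viewed now as a graded homomorphism of $\mathbb N$-graded (possibly noncommutative) rings. That theorem delivers the desired inequality $c_e(T(A))\ge c_e(T(B))$ for all $e\ge 0$ and concludes the argument. The only mild subtlety is the degree-wise finite generation hypothesis of Theorem~\ref{thm:nearly-onto}: if $T(A)$ is degree-wise finitely generated, then pushing a minimal homogeneous generating set forward via $T(h)$ yields a homogeneous generating set of $T(B)$, so $T(B)$ is degree-wise finitely generated as well and the theorem applies verbatim; if $T(A)$ is not degree-wise finitely generated, then $c_e(T(A))=\infty$ for some $e$ and the claimed inequality becomes vacuous (or one may simply reread the proof of Theorem~\ref{thm:nearly-onto} allowing $|X_e|=\infty$).

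Because both inputs, namely Lemma~\ref{lma:nearly-onto}(2) and Theorem~\ref{thm:nearly-onto}, are already in hand, there is no real obstacle to overcome; the corollary is essentially a packaging step. The one thing to double-check is that the $T$-construction behaves well enough with respect to the bimodule language of Theorem~\ref{thm:nearly-onto}, which is automatic here because for any skew $R$-algebra the left $R$-action on a homogeneous component determines the right $R$-action by the twist $a\cdot r=r^{p^e}\cdot a$. The content of the corollary is its intended downstream use: it will let us bound $\cx(T(\sR))$ for anticanonical covers $\sR$ of determinantal rings by $\cx(T(\sS))$ for ambient polynomial rings $\sS$, converting graded surjections $\sS\twoheadrightarrow\sR$ on the commutative side into complexity comparisons on the noncommutative side.
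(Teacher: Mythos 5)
Your proposal is correct and matches the paper's argument exactly: the paper also derives this corollary by combining Lemma~\ref{lma:nearly-onto}(2) with Theorem~\ref{thm:nearly-onto}. Your extra remark about the degree-wise finite generation hypothesis is a reasonable bit of due diligence that the paper leaves implicit.
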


\begin{proof}
This follows from Lemma~\ref{lma:nearly-onto} and
Theorem~\ref{thm:nearly-onto}.  
\end{proof}

\begin{Prop} \label{prop:nearly-onto}
Let $K$, $S_d$ and $\sR_d$ be as in Remark~\ref{rmk:segre-complete}
with $d \ge 3$. 
Then there are nearly onto graded ring homomorphisms from $\sR_d$ to
$K[x_1, \dotsc, x_{d}]$ and vice versa.
\end{Prop}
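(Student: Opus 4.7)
The plan is to exhibit both maps explicitly using the monomial description in Remark~\ref{rmk:segre-complete}, and then verify directly that each image generates the target over its degree-zero part.

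First I would define $\phi \colon \sR_d \to K[x_1,\dotsc,x_d]$ by ``projection to $\beta = 0$'': an element $f = \sum_i \sum_{|\alpha|-|\beta|=i} a_{\alpha,\beta}\, x^{\alpha} y^{\beta}$ of $\sR_d$ is sent to $\sum_i \sum_{|\alpha|=i} a_{\alpha,0}\, x^{\alpha}$. For each fixed degree $i$ the target piece is a finite-dimensional $K$-vector space, so $\phi$ is well defined, $K$-linear, and graded. Multiplicativity reduces to the convolution formula for coefficients of a product in $\sR_d$ together with the observation that $\beta + \delta = 0$ forces $\beta = \delta = 0$, so only terms with $\beta = 0$ on each side contribute to the projection. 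Since $\phi(x_j) = x_j$ for every $j$, the map is in fact surjective, hence a fortiori nearly onto.

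For the other direction, I would define $\psi \colon K[x_1,\dotsc,x_d] \to \sR_d$ by $x_j \mapsto x_j$ (viewed as a degree-$1$ element of $\sR_d$ via $\alpha = e_j$, $\beta = 0$); this is plainly a graded ring homomorphism. Being nearly onto amounts to $\sR_d = S_d[x_1,\dotsc,x_d]$. The key observation is that if $|\alpha| - |\beta| = n$ then $|\alpha| \ge n$, so some multi-index $\eta \le \alpha$ (componentwise) with $|\eta| = n$ exists; fix any such assignment $(\alpha,\beta) \mapsto \eta(\alpha,\beta)$, say the greedy one peeling off copies of $e_1$ first, then $e_2$, and so on. Given $f = \sum_{|\alpha|-|\beta|=n} a_{\alpha,\beta}\, x^{\alpha} y^{\beta} \in (\sR_d)_n$ and each $\eta$ with $|\eta| = n$, set
\[
s_{\eta} := \sum_{\substack{(\alpha,\beta) \\ \eta(\alpha,\beta) = \eta}} a_{\alpha,\beta}\, x^{\alpha-\eta} y^{\beta}.
\]
Since $|\alpha - \eta| = |\alpha| - n = |\beta|$, each $s_{\eta}$ lies in $S_d$; and by construction $f = \sum_{|\eta|=n} s_{\eta}\, x^{\eta}$, which is a \emph{finite} sum, proving $(\sR_d)_n \subseteq S_d[x_1,\dotsc,x_d]$.

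The substantive step is just this reindexing: partition the index set $\{(\alpha,\beta) : |\alpha|-|\beta|=n\}$ according to the chosen $\eta$, and observe that each block is recovered from multiplying a single element of $S_d$ by $x^{\eta}$. The only mild subtlety is bookkeeping with the (possibly) infinite sums permitted in $S_d$ and $\sR_d$, but since $S_d$ is closed under arbitrary $K$-linear combinations of monomials $x^{\gamma} y^{\delta}$ with $|\gamma| = |\delta|$, each $s_\eta$ is automatically a legitimate element of $S_d$ and the product $s_{\eta}\, x^{\eta}$ in $\sR_d$ is given by the expected term-by-term formula.
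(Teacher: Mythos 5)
Your proof is correct and follows essentially the same route as the paper: the same two maps (projection $y \mapsto 0$ and the inclusion of $K[x_1,\dotsc,x_d]$), with $\phi$ surjective and $\psi$ nearly onto because each $(\sR_d)_n$ is generated over $S_d$ by the monomials $x^{\eta}$ with $|\eta|=n$. Your explicit reindexing of the (possibly infinite) sums by a choice of $\eta\le\alpha$ is a careful justification of the module-generation step that the paper merely asserts.
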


\begin{proof}
In light of Remark~\ref{rmk:segre-complete}, we simply assume 
\begin{align*}
\mathscr R_d = \bigoplus_{i \in \mathbb N} 
\left(\prod_{\alpha \in \mathbb N^{d},\, 
\beta \in \mathbb N^{d-1},\, |\alpha| - |\beta|=i} 
Kx^{\alpha}y^{\beta}\right).
\end{align*}
Define $\phi \colon \sR_d \to K[x_1, \dotsc, x_{d}]$ 
and $\psi \colon K[x_1, \dotsc, x_{d}] \to \sR_d$ by 
\begin{align*}
\phi(f(x_1,\dotsc, x_d,\, y_1, \dotsc, y_{d-1}))
&= f(x_1,\dotsc, x_d,\, 0, \dotsc, 0) \in K[x_1, \dotsc, x_{d}]\\
\text{and} \qquad 
\psi(g(x_1,\dotsc, x_d)) &= g(x_1,\dotsc, x_d) \in \sR_d,
\end{align*}
for all $f(x_1,\dotsc, x_d,\, y_1, \dotsc, y_{d-1}) \in \sR_d$ 
and all $g(x_1,\dotsc, x_d) \in K[x_1, \dotsc, x_{d}]$.

It is routine to verify that both $\phi$ and $\psi$ are graded ring
homomorphisms. 
As $\phi \circ \psi$ is the identity map, we see that $\phi$ is onto
and hence nearly onto.
Finally, note that for every $i \in \mathbb N$, $(\sR_d)_i$ is
generated by $\psi((K[ x_1, \dotsc, x_{d}])_i)$ as a module over
$(\sR_d)_0 = S_d$. So $\psi$ is nearly onto. 
This completes the proof.
\end{proof}

\begin{Thm}\label{thm:segre}
Let $K$, $S_d$ and $\sR_d$ be as in 
Remark~\ref{rmk:segre-complete}
with $d \ge 3$. Assume that $K$ has prime characteristic $p$. 
Then 
\begin{enumerate}
\item $T(\sR_d)$ and $T(K[x_1,\dotsc, x_{d}])$ have the same
complexity sequence. 
\item $\cx_F(S_d) = \log_p \cx(T(K[x_1,\dotsc, x_{d}]))$. 
\item $\sF(E_d)$ is not finitely generated over $\sF_0(E_d) \cong
  S_d$, where $E_d$ stands for the injective hull of the residue field
  of $S_d$ (over $S_d$).
\item The Frobenius complexity of $S_3$ is
$\cx_F(S_3) = 1 + \log_p(p+1) - \log_p 2$. Moreover, $\lim_{p \to \infty} \cx_F(S_3)=2$.
\item If $p = 2$, then $\cx_F(S_4) = \log_2(5 + \sqrt 5)$.
\end{enumerate}
\end{Thm}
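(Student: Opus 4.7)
The plan is to deduce everything from the material already assembled. First I would settle (1) using the two nearly onto graded ring homomorphisms produced in Proposition~\ref{prop:nearly-onto}: applying Corollary~\ref{cor:nearly-onto} to $\phi\colon \sR_d \to K[x_1,\dotsc,x_d]$ gives $c_e(T(\sR_d)) \ge c_e(T(K[x_1,\dotsc,x_d]))$ for every $e$, and applying it to $\psi$ in the opposite direction gives the reverse inequality. Hence the two complexity sequences agree term by term, which in particular gives $\cx(T(\sR_d)) = \cx(T(K[x_1,\dotsc,x_d]))$.

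For (2), Theorem~\ref{anticanonical} provides a graded isomorphism $\sF(E_d) \simeq T(\sR_d)$, so by the definition of Frobenius complexity
\[
\cx_F(S_d) = \log_p \cx(\sF(E_d)) = \log_p \cx(T(\sR_d)) = \log_p \cx(T(K[x_1,\dotsc,x_d])),
\]
where the last equality comes from (1). Statement (3) is then immediate: part (1) also transfers the \emph{term-by-term} positivity, and Proposition~\ref{more_variables}(3) shows that $c_e(T(K[x_1,\dotsc,x_d]))>0$ for infinitely many $e$ when $d \ge 3$ (indeed $\xi_e(i)$ is strictly positive for suitable $i$). Consequently $c_e(\sF(E_d))$ is not eventually zero, and by Remark~\ref{locoh}(2) this forces $\sF(E_d)$ not to be finitely generated over $\sF^0(E_d)$; the identification $\sF^0(E_d) \cong S_d$ is built into the $R$-skew algebra structure recalled after Corollary~\ref{interpretation}.

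Now (4) and (5) are just a matter of reading off the explicit computations already done. For (4), Corollary~\ref{cor:3variables} gives $\cx(T(K[x_1,x_2,x_3])) = p(p+1)/2$; combining with (2) yields
\[
\cx_F(S_3) = \log_p\!\Bigl(\tfrac{p(p+1)}{2}\Bigr) = 1 + \log_p(p+1) - \log_p 2.
\]
Since $\log_p(p+1) = 1 + \log_p(1+1/p) \to 1$ and $\log_p 2 \to 0$ as $p \to \infty$, the limit is $2$. For (5), Example~\ref{2,4} computes $\cx(T(R[x_1,x_2,x_3,x_4])) = 5+\sqrt 5$ in characteristic $2$, so (2) gives $\cx_F(S_4) = \log_2(5+\sqrt 5)$.

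The only nontrivial point is part (1), and the delicate aspect there is that we truly need a \emph{pair} of nearly onto homomorphisms in opposite directions to get equality of complexity sequences (a single one would only give an inequality via Theorem~\ref{thm:nearly-onto}). That symmetry is exactly what Proposition~\ref{prop:nearly-onto} supplies, so no further obstacle remains.
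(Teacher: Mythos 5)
Your proposal is correct and follows exactly the same route as the paper: parts (1)--(2) via the two nearly onto maps of Proposition~\ref{prop:nearly-onto} together with Corollary~\ref{cor:nearly-onto} and Theorem~\ref{anticanonical}, part (3) via Proposition~\ref{more_variables}, and parts (4)--(5) by reading off Corollary~\ref{cor:3variables} and Example~\ref{2,4}. The paper's proof is just a terse citation of these same results; you have merely spelled out the details (correctly, including the observation that both directions of the nearly onto pair are needed for term-by-term equality).
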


\begin{proof}
Statements (1) and (2) follow from Corollary~\ref{cor:nearly-onto} and 
Proposition~\ref{prop:nearly-onto} in light of Theorem~\ref{anticanonical}. 
Statement~(3) follows from Proposition~\ref{more_variables} or
\cite{KSSZ}. Finally, (4) follows from Corollary~\ref{cor:3variables} 
while (5) follows from Example~\ref{2,4}.
\end{proof} 

\begin{Rem}\begin{enumerate}
\item
The statement in Theorem~\ref{thm:segre} (3) for $d=3$ has been proved
first in \cite[Section~5]{KSSZ}. It should be contrasted with (4) in
the above Theorem~\ref{thm:segre}. 

\item
This computation shows that even rings that are nice from the point of
view of tight closure theory, such as completions of determinantal
rings, can have positive Frobenius complexity. This means that, as $e
\to \infty$, there are more and more $e$th Frobenius actions on $\E$
that are fundamentally new (i.e., do not come from Frobenius actions
from lower degree). This phenomenon was illustrated in~\cite{KSSZ} as
well, but the Frobenius complexity provides a way to quantify
this. Interestingly, this invariant can be  
irrational and depends upon the characteristic $p$. 

\end{enumerate}
\end{Rem}


\begin{thebibliography}{GSTT}

\bibitem[ABZ]{ABZ} J.\ \`{A}lvarez Montaner, A.\ F.~Boix, S.\ Zarzuela, 
\emph{Frobenius and Cartier algebras of Stanley-Reisner rings}, 
J.~Algebra \textbf{358} (2012), 162--177.
\bibitem[Ka]{K} M.~Katzman 
\emph{An example of non-finitely generated algebra of Frobenius maps}, 
Proc.\ Amer.\ Math.\ Soc. \textbf{138} (2010), 2381-2383.
\bibitem[KSSZ]{KSSZ} M.\ Katzman, K.~Schwede, A.~K.~Singh, W.~Zhang, 
\emph{Rings of Frobenius operators}, Math.\ Proc.\ Cambridge Phil.\ Soc.\ 
\textbf{157} (2014), no.~1, 151--167.
\bibitem[LS]{LS} G.~Lyubeznik, K.~E.~Smith, 
\emph{On the commutation of the test ideal with localization and
  completion}, 
Trans.\ Amer.\ Math.\ Soc.\ \textbf{353} (2001), 3148--3180.
\bibitem[Sa]{Sa} J.~Sally, 
\emph{On the number of generators of ideals in local rings}, 
Marcel Dekker, New York and Basel, 1978.
\bibitem[Wa]{Wa} Kei-ichi Watanabe, 
\emph{Infinite cyclic covers of strongly F-regular rings}, 
Contemporary Mathematics \textbf{159} (1994), 423--432.  

\end{thebibliography}
\end{document}